\def\C{\ensuremath{\mathbb{C}}}
\def\P{\ensuremath{\mathbb{P}}}
\def\R{\ensuremath{\mathbb{R}}}
\def\Z{\ensuremath{\mathbb{Z}}}
\def\K{\ensuremath{\mathbb{K}}}
\def\AA{\ensuremath{\mathcal A}}
\def\CC{\ensuremath{\mathcal C}}
\def\DD{\ensuremath{\mathcal D}}
\def\EE{\ensuremath{\mathcal E}}
\def\FF{\ensuremath{\mathcal F}}
\def\KK{\ensuremath{\mathcal K}}
\def\MM{\ensuremath{\mathcal M}}
\def\OO{\ensuremath{\mathcal O}}
\def\PP{\ensuremath{\mathcal P}}
\def\TT{\ensuremath{\mathcal T}}
\def\UU{\ensuremath{\mathcal U}}
\def\vv{\ensuremath{\mathbf v}}
\def\rH{\mathrm H}
\def\rVb{\mathrm V^{\mathrm{b}}}
\def\rBb{\mathrm B^{\mathrm{b}}}
\def\rKb{\mathrm K^{\mathrm{b}}}
\def\rVbdg{\mathrm V^{\mathrm{b}}_{\mathrm{dg}}}
\def\rBbdg{\mathrm B^{\mathrm{b}}_{\mathrm{dg}}}
\def\Db{\mathop{\mathrm{D}^{\mathrm{b}}}\nolimits}
\def\Ext{\mathop{\mathrm{Ext}}\nolimits}
\def\Ku{\mathop{\mathcal{K}u}\nolimits}
\def\Hom{\mathop{\mathrm{Hom}}\nolimits}
\def\Ext{\mathop{\mathrm{Ext}}\nolimits}
\def\Ddg{\mathop{\mathrm{D}^{\mathrm{b}}_{\mathrm{dg}}}\nolimits}
\def\Cdg{\mathop{\mathrm{C}^{\mathrm{b}}_{\mathrm{dg}}}\nolimits}
\theoremstyle{plain}
  \newtheorem{thm}{Theorem}[section]
  \newtheorem{cor}[thm]{Corollary}
  \newtheorem{lem}[thm]{Lemma}
  \newtheorem{prop}[thm]{Proposition}
  \theoremstyle{definition}
\newtheorem{dfn}[thm]{Definition}
\newtheorem{rmk}[thm]{Remark}
\newtheorem{ex}[thm]{Example}
\newenvironment{customthm}[1]
  {\innercustomthm}
  {\endinnercustomthm}
\newtheorem*{str}{Strategy of the proofs}
\newtheorem*{rw}{Motivations and related works}
\newtheorem*{plan}{Plan of the paper}
\newtheorem*{ack}{Acknowledgements}  
\begin{document}

\title{Derived categories of hearts on Kuznetsov components}
\author{Chunyi Li, Laura Pertusi and Xiaolei Zhao}
\date{\today}

\maketitle 

\begin{abstract}
We prove a general criterion which guarantees that an admissible subcategory $\KK$ of the derived category of an abelian category is equivalent to the bounded derived category of the heart of a bounded t-structure. As a consequence, we show that $\KK$ has a strongly unique dg enhancement, applying the recent results of Canonaco, Neeman and Stellari. We apply this criterion to the Kuznetsov component $\Ku(X)$ when $X$ is a cubic fourfold, a Gushel--Mukai variety or a quartic double solid. In particular, we obtain that these Kuznetsov components have strongly unique dg enhancement and that exact equivalences of the form $\Ku(X) \xrightarrow{\sim} \Ku(X')$ are of Fourier--Mukai type when $X$, $X'$ belong to these classes of varieties, as predicted by a conjecture of Kuznetsov.  
\end{abstract}

\section{Introduction}

The triangulated category is an important tool from the point of view of algebraic geometry. In fact, the bounded derived category of coherent sheaves on a smooth projective variety $X$ has a triangulated structure and encodes much information about the geometry of $X$. In 1997, Bondal and Orlov proved that smooth projective varieties with ample (anti)canonical
bundle and equivalent bounded derived categories are isomorphic \cite{BO}. Similar reconstruction statements, called Categorical Torelli theorems, have been obtained for admissible subcategories of the bounded derived category, arising as residual components of exceptional collections in semiorthogonal decompositions, of certain Fano threefolds and fourfolds \cite{BMMS, soheylaetall, PY, BT, APR, HR, BLMS, LPZ_twisted} (see \cite{PS_survey} for a survey on this topic).

It is often convenient to associate higher categorical structures to a triangulated category $\TT$. The easiest one yields the notion of dg enhancement, which is a dg category with the same set of objects as $\TT$ and whose homotopy category is equivalent to $\TT$. One first advantage of passing to the dg level is that we gain a functorial notion of cone of a morphism \cite[Paragraph 2.9]{Dri}.

Not all triangulated categories have a dg enhacement (see for instance \cite{RV:nodg} for counterexamples). However, if $\AA$ is an abelian category, then an enhacement of $\Db(\AA)$ is given by the Verdier quotient of the dg category of bounded complexes in $\AA$ over its full dg subcategory of acyclic complexes. 

Once an enhancement exists, it is also natural to ask whether it is unique. This has been proved for $\Db(\AA)$ by Lunts and Orlov in 2009 when $\AA$ is a Grothendieck abelian category with a small set of compact generators \cite{LO}, generalized in 2015 by Canonaco and Stellari for any Grothendieck abelian category $\AA$ in \cite{CS_unique}, and finally proved in 2018 for any abelian category by Antieau in \cite{Antieau}. Recently Canonaco, Neeman and Stellari have given a new proof of this result in \cite{CNS}.

The first result of this paper is a criterion which guarantees that an admissible subcategory $\KK$ of the derived category of an abelian category is itself equivalent to the derived category of an abelian category. Clearly \cite{Antieau} implies that $\KK$ has a unique enhancement. Using the construction in \cite{CNS} we can further show that $\KK$ as in Theorem \ref{thm_derivedcategoryofheart_intro} has a unique enhancement in a strong sense (see Definition \ref{def_uniqueenh}), as stated below. 
\begin{thm}[Theorem \ref{thm_derivedcategoryofheart}, Theorem \ref{thm_stroglyunique}] \label{thm_derivedcategoryofheart_intro}
Let $\TT$ be the derived category of an abelian category. Let $\KK$ be an admissible subcategory of $\TT$ having a stability condition $\sigma=(\AA, Z)$, whose heart $\AA$ is induced from a heart on $\TT$ and satisfying Assumption \ref{star}. Then there is an exact equivalence $\Db(\AA) \xrightarrow{\sim} \KK$. Moreover, we have that $\KK$ has a strongly unique enhancement.
\end{thm}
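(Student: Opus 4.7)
The plan is to proceed in two stages: first construct an exact equivalence $R : \Db(\AA) \xrightarrow{\sim} \KK$ via a realization functor, and then transport strong uniqueness of the dg enhancement through this equivalence using the results of \cite{CNS}.

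To construct $R$, observe that since $\AA$ is the heart of a bounded t-structure on $\KK$ (coming from the stability condition $\sigma$), the inclusion $\AA \hookrightarrow \KK$ is fully faithful and exact. Because $\TT$ is the derived category of an abelian category, it carries a canonical dg enhancement, and the admissible subcategory $\KK$ inherits one by passing to the full dg subcategory on objects lying in $\KK$. One then lifts the inclusion of $\AA$ to a dg functor defined on bounded complexes in $\AA$ and takes the Verdier quotient by acyclic complexes to obtain an exact functor $R : \Db(\AA) \to \KK$. Equivalently, $R$ can be built via the classical Beilinson--Bernstein--Deligne recipe using a filtered version of the ambient triangulated category.

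The heart of the argument is showing that $R$ is fully faithful, which reduces to the Ext-comparison statement: for every $A, B \in \AA$ and $i \geq 0$, the natural map $\Ext^i_{\AA}(A, B) \to \Hom_{\KK}(A, B[i])$ is an isomorphism. The cases $i = 0, 1$ hold for any heart of a bounded t-structure. For $i \geq 2$ one needs an effaceability-type property: every class in $\Hom_{\KK}(A, B[i])$ must vanish after pulling back along a suitable epimorphism $A' \twoheadrightarrow A$ in $\AA$. This is where both the hypothesis that $\AA$ is induced from a heart on $\TT$ and Assumption \ref{star} enter: one first compares the $\KK$-Ext with the Ext computed in the ambient $\TT$ using the left and right adjoints of the admissible embedding $\KK \hookrightarrow \TT$, and then uses that $\TT$ is itself the derived category of an abelian category (so the realization functor for the ambient heart on $\TT$ is an equivalence) to obtain the desired effaceability for $\AA$. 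This comparison step is the main technical obstacle; once it is done, essential surjectivity of $R$ follows formally from boundedness of $(\AA, Z)$, since every object of $\KK$ admits a finite filtration by shifts of objects in $\AA$ and the image of $R$ contains these and is closed under cones.

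For the strong uniqueness conclusion, the main theorem of \cite{CNS} provides that $\Db(\AA)$ has a strongly unique dg enhancement. Since this is an intrinsic property of the triangulated category and is preserved under exact equivalence, the equivalence $R$ built above transports it to $\KK$, completing the proof.
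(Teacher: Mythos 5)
Your overall architecture matches the paper's (Beilinson realization functor $F\colon\Db(\AA)\to\KK$; reduce full faithfulness to an $\Ext$-comparison; get essential surjectivity formally from boundedness of the t-structure), but the two technical steps you leave as ``the main obstacle'' and as ``automatic from \cite{CNS}'' are precisely where the real work happens, and as sketched they both have gaps.

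\textbf{Full faithfulness.} For $A, B\in\AA$ the maps $\Ext^i_\AA(A,B)\to\Hom_\KK(A,B[i])$ are automatically isomorphisms for $i\le 1$; the issue is $i\ge 2$. Your suggested route — compare with $\TT$-Exts via the adjoints and import effaceability from the ambient heart $\AA_\TT$ — does not work. Since $\KK\hookrightarrow\TT$ is fully faithful, $\Hom_\KK(A,B[i])=\Hom_\TT(A,B[i])$ already with no comparison needed; the actual problem is producing an epimorphism $C\twoheadrightarrow A$ that (i) effaces a given class in $\Hom_\KK(A,B[2])$ and (ii) has $C$ lying in $\AA$, not merely in $\AA_\TT$. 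Effaceability for $\AA_\TT$ (from $\TT$ being a derived category) gives no control of this kind, because the effacing object there will in general leave $\KK$. The paper instead proves Lemma \ref{lem:lowboundspaning}: for every nonzero $A\in\AA$ and every $s\in\R$, there is $C\in\AA$ with $\mu^+(C)<s$ and a surjection $C\twoheadrightarrow A$ in $\AA$. This is a careful induction on $(\mathrm{rk},\deg)$ using both the density of stable objects (Assumption \ref{star}(b)) and the bounded cohomological amplitude (Assumption \ref{star}(c)); the case $\delta_0=0$ requires an additional rotation argument. Then, choosing $C$ with $\mu^+(C)<\mu^-(B)$, Assumption \ref{star}(c) gives $\Hom_\KK(C,B[2])=0$, so the long exact sequence for $0\to K\to C\to A\to 0$ produces the needed surjection $\Hom_\KK(K,B[1])\twoheadrightarrow\Hom_\KK(A,B[2])$ (Corollary \ref{ext2generatedbyext1}). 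One also needs $\Hom_\KK(A,B[n])=0$ for $n\ge 3$, again from Assumption \ref{star}(c). This is the content that the phrase ``Assumption \ref{star} enters'' must actually cash out, and your proposal does not supply it.

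\textbf{Strong uniqueness.} The step ``\cite{CNS} provides that $\Db(\AA)$ has a strongly unique dg enhancement'' is not available. \cite{CNS} (and \cite{Antieau}) give \emph{uniqueness} of enhancements of $\Db(\AA)$, not strong uniqueness. The paper explicitly treats strong uniqueness as additional: it constructs an almost ample set in $\AA$ out of Lemma \ref{lem:lowboundspaning} (so again the stability condition is essential) and combines the explicit quasi-functor from \cite[Sections 4--5]{CNS} with \cite[Proposition 3.3]{CS} on extending natural isomorphisms along almost ample sets to upgrade the uniqueness isomorphism $\epsilon'\circ\rH^0(F)\cong\epsilon$ first on the ample-set subcategory and then to all of $\Db(\AA)$. (Your subsequent observation that strong uniqueness is transported along an exact equivalence is correct, but it presupposes the claim you haven't established.)

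In short: the skeleton is right, but the heart of both parts of the theorem is exactly the stability-theoretic input (Lemma \ref{lem:lowboundspaning} and the resulting almost ample set), and your proposal replaces it by gestures that either don't go through (effaceability via $\AA_\TT$) or aren't true as stated (strong uniqueness from \cite{CNS}).
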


In the second part of this paper we apply Theorem \ref{thm_derivedcategoryofheart_intro} to several interesting geometric examples. The first and most famous is represented by the Kuznetsov component of a cubic fourfold $X$, defined as the full admissible subcategory
$$\Ku(X):= \langle \OO_X, \OO_X(1), \OO_X(2) \rangle^{\perp}=\lbrace E \in \Ku(X): \Hom_{\Db(X)}(\OO_X(i), E)=0 \text{ for every }i=0,1,2 \rbrace$$
of $\Db(X)$, where $\OO_X, \OO_X(1), \OO_X(2)$ are line bundles on $X$ (see Example \ref{ex_cubic4}). We also consider the Kuznetsov components of Gushel--Mukai varieties and of quartic double solids, which are defined analogously (see Examples \ref{ex_GMeven}, \ref{ex_GModd}, \ref{ex_qds}). We have the following main result.

\begin{thm}[Theorem \ref{thm_application}] \label{thm_application_intro} 
Let $\Ku(X)$ be the Kuznetsov component of a cubic fourfold or of a Gushel--Mukai variety or of a quartic double solid. Then there is an equivalence $\Ku(X) \cong \Db(\AA)$, where $\AA$ is the heart of a stability condition on $\Ku(X)$. Moreover, we have that $\Ku(X)$ has a strongly unique enhancement.
\end{thm}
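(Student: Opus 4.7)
The strategy is to reduce Theorem \ref{thm_application_intro} to a direct application of the general criterion stated in Theorem \ref{thm_derivedcategoryofheart_intro}. In each of the three geometric settings, the Kuznetsov component $\Ku(X)$ is by definition an admissible subcategory of $\Db(X)$, which is the bounded derived category of the abelian category of coherent sheaves on $X$. Therefore, to invoke Theorem \ref{thm_derivedcategoryofheart_intro} it suffices to (i) exhibit a stability condition $\sigma = (\AA, Z)$ on $\Ku(X)$ whose heart $\AA$ is induced from a heart on $\Db(X)$, and (ii) verify Assumption \ref{star} for this stability condition. The conclusion, namely the equivalence $\Ku(X) \cong \Db(\AA)$ together with the strongly unique dg enhancement statement, then follows formally.

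For step (i), I would invoke the now standard constructions of stability conditions on $\Ku(X)$ in each case. For cubic fourfolds, quartic double solids, and (in a slightly more subtle way) Gushel--Mukai varieties, stability conditions have been built via the double-tilting method of Bayer--Lahoz--Macr\`i--Stellari \cite{BLMS} and its subsequent extensions: one starts from slope stability on $\Db(X)$, performs two tilts to produce a heart $\AA' \subset \Db(X)$ together with a weak central charge, and then restricts to obtain a stability condition on the Kuznetsov component. By construction, the resulting heart on $\Ku(X)$ is of the form $\AA = \AA' \cap \Ku(X)$, so it is manifestly induced from the heart $\AA'$ on $\TT = \Db(X)$ in the sense required by Theorem \ref{thm_derivedcategoryofheart_intro}. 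I would organize the argument by treating the three families in parallel, citing the original construction in each case.

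The main obstacle will be step (ii), the verification of Assumption \ref{star} for each of the three stability conditions. I expect this assumption to require fine homological information about the heart $\AA$ inside $\KK$, for instance bounds on Ext-groups computed in $\Ku(X)$ or the existence of appropriate generators whose Ext-groups control the t-structure. The verification should proceed case by case, using the explicit description of $\AA$ as a tilted heart and the semiorthogonal decomposition defining $\Ku(X)$, together with the numerical Grothendieck group calculations already available in the literature for each family. Since the tilting producing $\AA'$ is designed precisely to yield Noetherian and well-behaved hearts, and since the projection functors onto $\Ku(X)$ preserve enough homological structure, this reduces to bookkeeping with cohomology in $\AA$ rather than new geometric input.

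Finally, once (i) and (ii) are established, Theorem \ref{thm_derivedcategoryofheart_intro} is applicable and yields both the equivalence $\Db(\AA) \xrightarrow{\sim} \Ku(X)$ and the strongly unique dg enhancement of $\Ku(X)$ in a uniform manner across the three families, thereby proving Theorem \ref{thm_application_intro}. The consequence for exact equivalences $\Ku(X) \xrightarrow{\sim} \Ku(X')$ being of Fourier--Mukai type, announced in the abstract, then follows by combining strong uniqueness of the enhancement with the results of Canonaco--Neeman--Stellari \cite{CNS}.
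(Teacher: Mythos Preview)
Your overall strategy matches the paper's: reduce to Theorem \ref{thm_derivedcategoryofheart_intro} by exhibiting the BLMS-type stability conditions, whose hearts are induced from tilted hearts on $\Db(X)$, and then verify Assumption \ref{star}. However, your account of step (ii) contains a real gap. You describe the verification of Assumption \ref{star} as ``bookkeeping with cohomology in $\AA$ rather than new geometric input,'' and this mischaracterizes where the work lies.

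Assumption \ref{star} has three parts. Part (a), discreteness of the image of $Z$, is indeed routine lattice bookkeeping. Part (c), the vanishing $\Hom(E,F[m])=0$ for $m\geq 3$ (and for $m\geq 2$ when $\mu(E)<\mu(F)$) between $\sigma$-stable objects, is \emph{not} a general feature of tilted hearts: for cubic fourfolds and even-dimensional GM varieties it follows from Serre duality because $S_{\Ku(X)}\cong[2]$, but for GM threefolds and quartic double solids $\Ku(X)$ is an Enriques category, and one needs the Serre-invariance of $\sigma$ proved in \cite{PR} and \cite{PY} to bound $\phi(S_{\Ku(X)}(E))$. Most importantly, part (b) asks that for every nonzero $E\in\AA$ and every $s_0\in\R$ there exist a $\sigma$-stable $F$ with $\mu(F)<s_0$ and $\Hom_{\KK}(F,E)\neq 0$. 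The paper deduces this from nonemptiness of moduli spaces $M^s_\sigma(\Ku(X),v)$ for enough primitive classes $v$, which is substantial geometric input imported from \cite{BLMNPS}, \cite{PPZ}, and \cite{PPZ_enriques}; it is not a consequence of the tilting construction or of projection functors. For the quartic double solid the available nonemptiness statement only covers classes of the form $2c\lambda_1+d\lambda_2$, and the paper must run a separate induction (Lemma \ref{lem:doublequarticsolidassump}) to get part (b) in that case.

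You also omit one structural step present in the paper: for GM fivefolds and sixfolds the stability conditions are not built directly on $\Ku(X)$; one first reduces to the Kuznetsov component of a GM threefold or fourfold via the duality equivalence of \cite{KP_cones}.
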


Theorem \ref{thm_application_intro} has interesting consequences on the characterization of exact equivalences between these Kuznetsov components as functors of Fourier--Mukai type (see Definition \ref{def_FMtype}). In fact, the most important exact functors in the geometric context are of Fourier--Mukai type. In 1996, Orlov proved that every exact  fully faithful functor with adjoints between the bounded
derived categories of coherent sheaves on smooth projective varieties is of Fourier--Mukai type \cite{O:eqofdcat}.  Since then this result have been further generalized, see \cite{Kawamata:eqofdgstack, CS_twistedFM, LO, CS_supported}. In particular, the key point in \cite{LO} was showing the existence of a dg lift of the functor (see Definition \ref{def_dglift}) to the enhacements, which implies it is of Fourier--Mukai type by the work of To\"en \cite{To}.

In our setting we can prove a version of Orlov's result for the studied Kuznetsov components.

\begin{thm} \label{thm_FMtype}
Let $X_1$, $X_2$ be two cubic fourfolds or Gushel--Mukai varieties of even dimension (resp.\ Gushel--Mukai varieties of odd dimension or quartic double solids). Then every fully faithful exact functor (resp.\ exact equivalence) $\Ku(X_1) \to \Ku(X_2)$ is of Fourier--Mukai type.
\end{thm}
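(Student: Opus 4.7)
The plan is to follow the Lunts--Orlov dg strategy~\cite{LO}: produce a dg lift of $F$ and then invoke To\"en's theorem~\cite{To} to represent it by a kernel. The essential input is Theorem~\ref{thm_application_intro}, which equips each $\Ku(X_i)$ with a strongly unique dg enhancement. I would fix as a concrete model the full dg subcategory $\Ku_{\mathrm{dg}}(X_i) \subset \Ddg(X_i)$ cut out by the semiorthogonal decomposition defining $\Ku(X_i)$.

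The first step is to construct a dg lift $\tilde F$ of $F$. For an exact equivalence, strong uniqueness of the enhancements (Definition~\ref{def_uniqueenh}) directly produces a quasi-equivalence $\tilde F : \Ku_{\mathrm{dg}}(X_1) \to \Ku_{\mathrm{dg}}(X_2)$ inducing $F$ on homotopy categories: this is the content of strong uniqueness as formulated in \cite{CNS}, via the equivalences $\Ku(X_i) \cong \Db(\AA_i)$ of Theorem~\ref{thm_application_intro}. In the fully faithful case, treated only for cubic fourfolds and even-dimensional Gushel--Mukai varieties, one uses that $\Ku(X_i)$ is smooth and proper as a triangulated category (being admissible in $\Db$ of a smooth projective variety), so $F$ admits both adjoints and its essential image is admissible in $\Ku(X_2)$. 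Writing $F$ as the composition of the equivalence $\Ku(X_1) \xrightarrow{\sim} \operatorname{Im}(F)$ with the inclusion into $\Ku(X_2)$, one lifts the first piece by strong uniqueness and the second by the dg semiorthogonal decomposition induced on $\Ku_{\mathrm{dg}}(X_2)$.

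In the second step I would compose $\tilde F$ with the natural inclusion $\Ku_{\mathrm{dg}}(X_2) \hookrightarrow \Ddg(X_2)$ and precompose with the dg projection $\Ddg(X_1) \to \Ku_{\mathrm{dg}}(X_1)$ coming from the semiorthogonal decomposition of $\Db(X_1)$. This yields a quasi-functor $\Ddg(X_1) \to \Ddg(X_2)$. To\"en's theorem~\cite{To}, combined with the standard representability argument for smooth projective varieties used in~\cite{LO}, then produces a kernel $\EE \in \Db(X_1 \times X_2)$ whose Fourier--Mukai transform $\Phi_\EE$ induces the given quasi-functor on homotopy categories. Restricting $\Phi_\EE$ to $\Ku(X_1)$ recovers $F$ up to isomorphism, proving that $F$ is of Fourier--Mukai type.

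The main obstacle is Step~1: extracting a genuine quasi-functorial dg lift of $F$ from the strong uniqueness of the enhancement. This is precisely why Theorem~\ref{thm_application_intro} is stated in the strong form of \cite{CNS} rather than the weaker plain uniqueness of Antieau~\cite{Antieau}; the strengthening encodes exactly the lifting property needed here. A secondary technical check, standard for smooth projective targets, is that the dg bimodule produced by To\"en's theorem is represented by a bounded complex of coherent sheaves on $X_1 \times X_2$ rather than a more general dg module, which follows from smoothness and properness of the $X_i$ by a compactness argument.
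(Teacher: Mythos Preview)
Your overall strategy coincides with the paper's: produce a dg lift via strong uniqueness of the enhancement, compose with the dg inclusion and dg projection coming from the semiorthogonal decompositions, and invoke To\"en to obtain a Fourier--Mukai kernel. For exact equivalences this is exactly the content of the paper's Proposition~\ref{prop_FMtype} and Corollary~\ref{cor_FMequivalence}.

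The one genuine difference is how the fully faithful case (cubic fourfolds and even-dimensional GM varieties) is handled. You factor $F$ as an equivalence onto its essential image followed by an admissible inclusion, and lift each piece separately; this works, but requires you to check admissibility of the image and to argue that the inclusion lifts compatibly. The paper instead observes that $\Ku(X_i)$ is a connected Calabi--Yau category (Serre functor $[2]$ and $\mathrm{HH}^0(\Ku(X_i))=\C$), so by \cite[Proposition~5.1]{Kuz_Calabi} any fully faithful exact functor $\Ku(X_1)\to\Ku(X_2)$ is automatically an equivalence. This immediately reduces the fully faithful statement to the equivalence statement already established, with no further dg work. Your route is more robust in principle (it would apply to fully faithful functors whose target is not Calabi--Yau), but the paper's argument is shorter and also explains cleanly \emph{why} the theorem only asserts the fully faithful version in the CY cases and restricts to equivalences for odd-dimensional GM varieties and quartic double solids.
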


\begin{rw}
In \cite[Definition 3.1]{Kuz_hpd} Kuznetsov defined the notion of splitting functor which is a generalization of that of fully faithful functor. Motivated by Orlov's result, he conjectured in \cite[Conjecture 3.7]{Kuz_hpd} that a splitting functor between bounded derived categories of coherent
sheaves on smooth projective varieties is of Fourier--Mukai type. Note that functors of Fourier--Mukai type are splitting functors. Thus Theorem \ref{thm_FMtype} proves the above mentioned conjecture in the considered geometric cases.  

In the case of the quartic double solid it was observed in \cite[Theorem 7.2]{BT} that Theorem \ref{thm_FMtype} implies the failure of original Fano threefolds Kuznetsov's Conjecture \cite[Conjecture 3.7]{Kuz_Fano3}. Note that Fano threefolds Conjecture has been disproved in \cite{Zhang} and \cite{BP}, independently, in a stronger sense, namely that the Kuznetsov component of a quartic double solid is never equivalent to that of a Gushel--Mukai threefold. 

In \cite{BZ} the authors proved the formality conjecture for semistable objects in the bounded derived category of a K3 surface, using Orlov's result on strongly uniqueness of the enhancement. In the case of cubic fourfolds and Gushel--Mukai varieties of even dimension, the formality conjecture follows from the general results in \cite{Davison}. Nevertheless, Theorem \ref{thm_application_intro} could be useful to provide a direct and simpler proof of this conjecture in these cases. Moreover, the description in Theorem \ref{thm_application_intro} as the bounded derived category of a heart of a stability condition makes the Kuznetsov component much more explicit and manageable.

An interesting question arisen in \cite{CNS} is whether there exist admissible subcategories of the bounded derived category of coherent sheaves on a smooth projective scheme over a field with a non-unique enhancement. Theorem \ref{thm_derivedcategoryofheart_intro} could be helpful to find an answer to this question. 

Finally we believe that Theorem \ref{thm_derivedcategoryofheart_intro} could be applied to the Kuznetsov component of a cubic threefold, although we cannot yet show this, because of the lack of a control of the semistable objects (see Remark \ref{rmk_cubic3}).
\end{rw}

\begin{str}
In \cite{Beilinson1987} Beilinson constructed a functor, known as realization functor, from the derived category of an abelian category in a triangulated category $\TT$ to $\TT$. To prove the first part of Theorem \ref{thm_derivedcategoryofheart_intro} we show that the realization functor is an equivalence under suitable assumptions, which are listed in Assumption \ref{star}. To summarize, we require the existence of a stability condition on the admissible subcategory $\KK$ with a dense set of semistable objects and whose heart has homological dimension $\leq 2$. Assuming this, we show that the $\Hom^2_{\KK}$ between objects in $\AA$ are generated by elements in $\Ext^1_{\AA}$ through Yoneda's composition, which implies the realization functor is an equivalence by \cite[IV, Exercise 2]{GM}.

The second part of Theorem \ref{thm_derivedcategoryofheart_intro} follows from the first part of Theorem \ref{thm_derivedcategoryofheart_intro}, the construction in \cite{CNS} of the quasi-isomorphism between the enhancements and a criterion in \cite{CS} for the extension of isomorphisms of functors.

We prove that Assumption \ref{star} holds for the Kuznesov component of a cubic fourfold and of a Gushel--Mukai variety of even dimension using \cite{BLMS, BLMNPS, PPZ}. In the case of Gushel--Mukai varieties of odd dimension and quartic double solids, stability conditions are known to exist by \cite{BLMS}. We make use of \cite{PR} and \cite{PY} to control the homological dimension of the heart, and of a paper in preparation \cite{PPZ_enriques} where we show the density of the set of semistable objects. This provides the proof of Theorem \ref{thm_application_intro}.

We remark in Proposition \ref{prop_FMtype} that the strongly uniqueness of the enhancement implies equivalences have a dg lift, and these are of Fourier--Mukai type. This implies Theorem \ref{thm_FMtype}.
\end{str}

\begin{plan}
In Section \ref{sec_prelim} we recollect the introductory material on enhancements, Fourier--Mukai functors and stability conditions we need in the next, and the definitions of the Kuznetsov components of cubic fourfolds, Gushel--Mukai varieties and quartic double solids. In Sections  \ref{sec_hearts} and \ref{sec_enhanc} we prove Theorem \ref{thm_derivedcategoryofheart_intro}. In Section \ref{sec_FMfunctors} we explain how to deduce from the second part of Theorem \ref{thm_derivedcategoryofheart_intro} the characterization of equivalences as Fourier--Mukai functors. Section \ref{sec_applications} is devoted to the proof of Theorems \ref{thm_application_intro} and \ref{thm_FMtype}. 
\end{plan} 

\begin{ack}
We wish to thank Paolo Stellari for many useful conversations and for suggesting the strategy in Section \ref{sec_enhanc} and the application to the characterization of equivalences.

C.L.\ is supported by the Royal Society URF$\backslash$R1$\backslash$201129 “Stability condition and application in algebraic geometry”. L.P.\ is supported
by the national research project PRIN 2017 Moduli and Lie Theory. X.Z.\ is partially supported by the Simons Collaborative Grant 636187, NSF grant DMS-2101789, and NSF FRG grant DMS-2052665.
\end{ack}

\section{Preliminaries on dg enhancements, stability conditions, and Kuznetsov components}  \label{sec_prelim}
 
In this section we recollect some definitions and known results on dg enhancements and stability conditions. Finally we list the examples of geometric categories to investigate in this paper.

\subsection{Enhancements and Fourier--Mukai functors} 
Let $\K$ be a field. Recall that a differential graded (dg) category is a $\K$-linear category $\EE$ such that for every pair of objects $A, B \in \EE$ the space of morphisms $\Hom_{\EE}(A, B)$ has the structure of $\Z$-graded $\K$-module with differential $d \colon \Hom_{\EE}(A, B) \to \Hom_{\EE}(A, B)$ of degree $1$ and such that the composition maps $\Hom_{\EE}(B, C) \otimes_{\K} \Hom_{\EE}(A, B) \to \Hom_{\EE}(A, C)$ are morphisms of complexes for every $A, B, C \in \EE$. 

The homotopy category of a dg category $\EE$, denoted by $\rH^0(\EE)$, is the category with the same set of objects as $\EE$ and such that $\Hom_{\rH^0(\EE)}(A, B)=\rH^0(\Hom_{\EE}(A, B))$ for every $A, B \in \EE$. 

Note that if $\EE$ is a pretriangulated dg category (see \cite[Definition 1.7]{CS_tour}), then $\rH^0(\EE)$ is triangulated. In this case, we have the following definitions.

\begin{dfn}
A (dg) enhancement of a triangulated category $\TT$ is a pair $(\EE, \epsilon)$, where $\EE$ is a pretriangulated dg category and $\epsilon \colon \rH^0(\EE) \to \TT$ is an exact equivalence.
\end{dfn}

Recall that a dg functor $F \colon \EE \to \EE'$ between two dg categories $\EE$, $\EE'$ is a functor such that for every pair of objects $A, B \in \EE$, the map $F_{A, B} \colon \Hom_{\EE}(A, B) \to \Hom_{\EE'}(F(A), F(B))$ is a morphism of complexes of $\K$-modules. A dg functor $F$ is a quasi-equivalence if $F_{A, B}$ is a quasi-isomophism for every $A, B \in \EE$ and $\rH^0(F)$ is an equivalence. 

We denote by Hqe the localization of the category of small dg categories with respect to quasi-equivalences. Morphisms in Hqe are called quasi-functors.

\begin{dfn} \label{def_uniqueenh}
A triangulated category $\TT$ has a unique enhancement if given two enhancements $(\EE, \epsilon)$, $(\EE', \epsilon')$ there exists a quasi functor $F \colon \EE \to \EE'$ such that the induced exact functor $\rH^0(F)$ is an equivalence. We say that $\TT$ has a strongly unique enhacement if in addition $F$ can be chosen with the property that there is an isomorphism of functors $\epsilon' \circ \rH^0(F) \cong \epsilon$.  
\end{dfn}

Let $\Phi \colon \Db(X) \to \Db(X')$ be an exact functor between the bounded derived categories of two smooth projective $\K$-schemes $X$ and $X'$. Recall that $\Phi$ is of Fourier--Mukai type if there exists $K \in \Db(X \times X')$ and an isomorphism of functors
\begin{equation} \label{eq_FMfunctor}
\Phi(-) \cong p'_*(K \otimes p^*(-)),    
\end{equation}
where $p \colon X \times X' \to X$, $p' \colon X \times X' \to X'$ are the projections. Let $(\EE, \epsilon)$ and $(\EE', \epsilon')$ be enhancements of $\Db(X)$ and $\Db(X')$, respectively.
\begin{dfn} \label{def_dglift}
A quasi-functor $F \colon \EE \to \EE'$ is a dg lift of $\Phi$ if there is an isomorphism of exact functors $\Phi \cong \epsilon' \circ \rH^0(F) \circ \epsilon^{-1}$.
\end{dfn}

By \cite{LS, To} (see also \cite[Proposition 6.1]{CS_tour}) we have that $\Phi \colon \Db(X) \to \Db(X')$ is a Fourier--Mukai functor if and only if $\Phi$ has a dg lift. 

We suggest the interested readers to consult the excellent survey \cite{CS_tour} for more details and examples on these topics. 

\subsection{Stability conditions on triangulated categories}

Let $\TT$ be a triangulated category. Let us recall the notions of bounded t-structure and stability condition on $\TT$, introduced by Bridgeland in \cite{Bri}.

\begin{dfn}
A t-structure on $\TT$ is a pair of full subcategories $(\DD^{\leq 0}, \DD^{\geq 0})$ satisfying the following conditions:
\begin{enumerate}
    \item $\DD^{\leq 0} \subseteq \DD^{\leq 0}[-1]$ and $\DD^{\geq 0} \supseteq \DD^{\geq 0}[-1]$;
    \item $\Hom_{\TT}(X, Y)=0$ for $X \in \DD^{\leq 0}$, $Y \in \DD^{\geq 0}[-1]$;
    \item For any object $T \in \TT$ there exists an exact triangle $X \to T \to Y \xrightarrow{+}$, where $X \in \DD^{\leq 0}$ and $Y \in \DD^{\geq 0}[-1]$.
\end{enumerate}
The heart of a t-structure $(\DD^{\leq 0}, \DD^{\geq 0})$ is the full subcategory $\AA:=\DD^{\geq 0} \cap \DD^{\leq 0}$. A t-structure is bounded if $$\TT= \bigcup_{a \leq b} \DD^{[a,b]},$$
where $\DD^{[a,b]}:= \DD^{\leq 0}[-b] \cap \DD^{\geq 0}[-a]$.
\end{dfn}

\noindent By \cite{BBD} the heart of a t-structure is an abelian category. Given a t-structure $(\DD^{\leq 0}, \DD^{\geq 0})$, there exist functors 
$$\tau_{\leq a} \colon \TT \to \DD^{\leq 0}[-a], \quad \tau_{\geq a} \colon \TT \to \DD^{\geq 0}[-a]$$
called truncation functors, which are right adjoint and left adjoint to the inclusion functors $\DD^{\leq 0}[-a] \to \TT$ and $\DD^{\geq 0}[-a] \to \TT$, respectively. For every object $T \in \TT$, there exists an exact triangle of the form
$$\tau_{\leq 0}(T) \to T \to \tau_{\geq 1}(T) \xrightarrow{+}$$
(see \cite[IV.4.5.Lemma]{GM}).  

Fix a finite rank lattice with a surjective morphism $\omega \colon K(\TT) \twoheadrightarrow \Lambda$, where $K(\TT)$ denotes the Grothendieck group of $\TT$. 

\begin{dfn} \label{def_stabilitycond}
A stability condition (with respect to $\Lambda$) on $\TT$ is a pair $\sigma=(\AA,Z)$, where $\AA$ is the heart of a bounded t-structure on $\TT$ and $Z \colon \Lambda \to \C$ is a group morphism called central charge, satisfying the following properties:
\begin{enumerate}
    \item For any $0 \neq E \in \AA$ we have $\Im Z\omega(E) \geq 0$, and in the case that $\Im Z\omega(E) = 0$, we have $\Re Z\omega(E) < 0$ (we will write $Z(-)$ instead of $Z\omega(-)$ for simplicity). 
    
    \noindent The slope of $E \in \AA$ is defined as 
    $$\mu_{Z}(E)= 
    \begin{cases}
    -\frac{\Re Z(E)}{\Im Z(E)} & \text{if } \Im Z(E) > 0, \\
    + \infty & \text{otherwise,}
    \end{cases}$$ and a non-zero object $E \in \TT$ is $\sigma$-(semi)stable if $E[k] \in \AA$ for some $k \in \Z$, and for every nonzero proper subobject $F \subset E[k]$ in $\AA$ we have $\mu_{Z}(F) < (\leq) \ \mu_{Z}(E[k]/F)$.
    \item Every object of $E \in \AA$ has a filtration 
    $$0=E_0 \hookrightarrow E_1 \hookrightarrow \dots E_{m-1} \hookrightarrow E_m=E$$
    where $A_i:=E_i/E_{i-1}$ is $\sigma$-semistable and $\mu_Z(A_1) > \dots > \mu_Z(A_m)$. 
    \item There exists a quadratic form $Q$ on $\Lambda \otimes \R$ such that the restriction of $Q$ to the kernel of $Z$ is negative definite and $Q(E) \geq 0$ for all $\sigma$-semistable objects $E$ in $\AA$.
    \end{enumerate}
\end{dfn}
\noindent The objects $A_i$ in Definition \ref{def_stabilitycond} are called Harder--Narasimhan factors of $E$.

Given a stability condition $\sigma=(\AA, Z)$ on $\TT$, we can associate a slicing as follows. Recall that the phase of $E \in \AA$ is 
$$\phi(E)= 
\begin{cases}
\frac{1}{\pi}\text{Arg}(Z(E)) & \text{if } \Im Z(E)>0, \\
1 & \text{otherwise}. 
\end{cases}
$$
If $F=E[k]$ for $E \in \AA$, then $\phi(F)=\phi(E)+k$. We define the collection $\PP_\sigma=\lbrace \PP_\sigma(\phi) \rbrace$ of full additive subcategories $\PP_\sigma(\phi) \subset \TT$ for $\phi \in \R$ such that:
\begin{enumerate}
\item if $\phi \in (0,1]$, the subcategory $\PP_\sigma(\phi)$ is the union of the zero object and all $\sigma$-semistable objects with phase $\phi$;
\item for $\phi+n$ with $\phi \in (0,1]$ and $n \in \Z$, set $\PP_\sigma(\phi+n):=\PP_\sigma(\phi)[n]$.
\end{enumerate}
We write $\PP_\sigma(I)$, where $I \subset \R$ is an interval, to denote the extension-closed subcategory of $\TT$ generated by the subcategories $\PP_\sigma(\phi)$ with $\phi \in I$. Note that $\PP_\sigma((0, 1])= \AA$. 

Note that $\PP_\sigma(\phi)$ has finite lenght for every $\phi \in \R$. In particular, every object $E \in \PP_\sigma(\phi)$ has a (non-unique) finite filtration in $\sigma$-stable objects of the same phase $\phi$, which are called Jordan--H\"older factors.

Now let $X$ be a smooth projective variety defined over the field of complex numbers $\C$. Assume that $\TT$ is a full admissible subcategory of the bounded derived category $\Db(X)$, in other words, the inclusion functor $\TT \to \Db(X)$ is fully faithful and has left and right adjoint. Let $\sigma=(\AA, Z)$ be a stability condition on $\TT$ with respect to the numerical Grothendieck group $\mathrm{K}_{\mathrm{num}}(\TT)$ of $\TT$. For $v \in \mathrm{K}_{\mathrm{num}}(\TT)$, consider the functor 
$$\MM_\sigma(\TT, v) \colon (\text{Sch})^{\text{op}} \to \text{Gpd}$$
from the category of schemes over $\C$ to the category of groupoids, which associates to a scheme $S$ the groupoid $\MM_\sigma(\TT, v)(S)$ of all perfect complexes $E \in \text{D}(X \times S)$ such that, for every $s \in S$, the restriction $E_s$ of $E$ to the fiber $X \times \lbrace s \rbrace$ belongs to $\TT$, is $\sigma$-semistable of phase $\phi$ and $\vv(E_s)=v$. In the examples we will consider in this paper, the functor $\MM_{\sigma}(\TT, v)$ admits a good moduli space $M_\sigma(\TT, v)$, in the sense of \cite{Alper}, which is a proper algebraic space over $\C$. We will denote by $M_\sigma^s(\TT, v)$ the locus of classes of $\sigma$-stable objects in $M_\sigma(\TT, v)$.

Denote by $\text{Stab}(\TT)$ the set of stability conditions on $\TT$ with respect to $\mathrm{K}_{\mathrm{num}}(\TT)$. By Bridgeland Deformation Theorem \cite{Bri} the set $\text{Stab}(\TT)$ (given that it is non-empty) has the structure of complex manifold of dimension equal to the rank of $\mathrm{K}_{\mathrm{num}}(\TT)$. On $\text{Stab}(\TT)$ we have the following right group action. Let $\widetilde{\textrm{GL}}_2^+(\mathbb{R})$ be the universal cover of $\textrm{GL}_2^+(\mathbb{R})$. Given $\widetilde{g}=(g,M) \in \widetilde{\textrm{GL}}_2^+(\mathbb{R})$ with $M \in \textrm{GL}_2^+(\mathbb{R})$ and $g: \mathbb{R} \to \mathbb{R}$ increasing with $g(\phi +1 )= g(\phi) +1$, the action on $\sigma = (\mathcal{P}_\sigma((0,1]), Z) \in \text{Stab}(\TT)$ is given by 
$$\sigma \cdot \widetilde{g} = (\mathcal{P}_\sigma((g(0), g(1)]), M^{-1} \circ Z).$$
In particular, $\sigma$ and $\sigma \cdot \widetilde{g}$ have the same set of semistable objects but with different phases.

\subsection{Semiorthogonal decompositions and Kuznetsov components}

Let $\TT$ be a $\K$-linear triangulated category, where $\K$ is a field. 

\begin{dfn}
A semiorthogonal decomposition for $\TT$, denoted by $\TT = \langle \TT_1, \dots, \TT_m \rangle$, is a sequence of full triangulated subcategories $\TT_1, \dots, \TT_m$ of $\TT$ such that: 
	\begin{enumerate}
		\item $\Hom_{\TT}(E, F) = 0$, for all $E \in \TT_i$, $F \in \TT_j$ and $i>j$;	\item For any $E \in \TT$, there is a sequence of morphisms
		\begin{equation*}  
		0 = E_m \to E_{m-1} \to \cdots \to E_1 \to E_0 = E,
		\end{equation*}
		such that $\mathrm{Cone}(E_i \to E_{i-1}) \in \TT_i$ for $1 \leq i \leq m$.  
	\end{enumerate}
\end{dfn}

\begin{dfn}
An object $E\in\TT$ is exceptional if $\Hom_{\TT}(E,E[k])=0$ for all integers $k \neq0$, and $\Hom_{\TT}(E,E)\cong \K$. An exceptional collection is a collection of objects $E_1,\dots,E_m$ in $\TT$ such that $E_i$ is an exceptional object for all $i$, and $\Hom_{\TT}(E_i,E_j[k])=0$ for all $k$ and $i>j$.
\end{dfn}

Given an exceptional collection $E_1,\dots,E_m$ in $\TT$, we have the semiorthogonal decomposition
$$\TT=\langle \KK, E_1,\dots,E_m \rangle,$$
where $\KK:=\langle E_1,\dots,E_m \rangle^{\perp}= \lbrace F \in \TT: \Hom_{\TT}(E_i, F)=0  \text{ for all } i=1, \dots, m\rbrace$.

We now recall some explicit examples of semiorthogonal decompositions associated to exceptional collections, which define the Kuznetsov components we will consider in the next. In all of them, we assume $X$ is a variety defined over $\C$. \footnote{The following examples can be stated more generally over an algebraically closed field of characteristic $0$ or large enough positive characteristic. However, we need to work over $\C$ to have the results on moduli spaces which we will use in Section \ref{sec_applications}.} 

\begin{ex} \label{ex_cubic4}
Let $X \subset \P^5$ be a cubic fourfold, in other words, a smooth cubic hypersurface in $\P^5$. By \cite{Kuz_cubic} the bounded derived category of $X$ has a semiorthogonal decomposition of the form
$$\Db(X)= \langle \Ku(X), \OO_X, \OO_X(1), \OO_X(2) \rangle,$$
where $\OO_X, \OO_X(1), \OO_X(2)$ are an exceptional collection of line bundles on $X$ and 
$$\Ku(X):=\langle \OO_X, \OO_X(1), \OO_X(2) \rangle^\perp$$
is known as the Kuznetsov component of $X$. The Serre functor of $\Ku(X)$ satisfies $S_{\Ku(X)} \cong [2]$. In particular, $\Ku(X)$ is a noncommutative K3 surface. Stability conditions on $\Ku(X)$ have been constructed in \cite{BLMS} and the associated moduli spaces of stable objects have been studied in \cite{BLMNPS}. 
\end{ex}

\begin{ex} \label{ex_GMeven}
The second example of noncommutative K3 surface is given by the Kuznetsov component of a Gushel--Mukai variety of even dimension. Recall that a GM variety of dimension $2 \leq n \leq 6$ is a smooth intersection of the form
$$X= \text{CG}(2,5) \cap Q \subset \P^{10},$$
where $\text{CG}(2,5)$ denotes the cone over the Grassmannian $\text{G}(2, 5)$ embedded via the Pl\"ucker embedding in a $10$-dimensional projective space $\P^{10}$, and $Q$ is a quadric hypersurface in a projective space $\P^{n+4} \subset \P^{10}$ of dimension $n+4$. By \cite{KP} the bounded derived category of $X$ has the semiorthogonal decomposition 
$$\Db(X)= \langle \Ku(X), \OO_X, \UU_X^\vee, \dots, \OO_X(n-3), \UU^\vee_X(n-3)\rangle.$$
Here $\UU_X^\vee$ denotes the pullback of the dual of the rank-two tautological bundle on the Grassmannian. If $n$ is even, then the Serre functor of $\Ku(X)$ is isomorphic to the homological shift $[2]$. Moreover, stability conditions and their related moduli spaces have been constructed and studied in \cite{PPZ}.
\end{ex}

\begin{ex} \label{ex_GModd}
We can also consider the Kuznetsov component of a GM variety $X$ of odd dimension, namely a GM threefold or fivefold. In this case, we have that $\Ku(X)$ is an Enriques category, since its Serre funtor is the composition of an involutive autoequivalence and the homological shift by $2$ \cite{KP}. Stability conditions on $\Ku(X)$ have been constructed in \cite[Section 6]{BLMS}. 
\end{ex}

\begin{ex} \label{ex_qds}
Another example of Enriques category is given by the Kuznetsov component of a quartic double solid $X$, which is the double cover of $\P^3$ ramified in a quartic surface. By \cite[Corollary 3.5]{Kuz_Fano3} there is a semiorthogonal decomposition of the form
$$\Db(X)= \langle \Ku(X), \OO_X, \OO_X(1) \rangle,$$
where $\Ku(X)$ is the Kuznetov component. Its Serre functor is the composition of an involutive autoequivalence and the homological shift by $2$ by \cite[Corollary 4.6]{Kuz_Calabi}. Again stability conditions on $\Ku(X)$ have been constructed in \cite[Section 6]{BLMS}. 
\end{ex}

\section{Proof of the general results} 

In this section we prove Theorem \ref{thm_derivedcategoryofheart_intro} which is split in Theorem \ref{thm_derivedcategoryofheart} and Theorem \ref{thm_stroglyunique}.

\subsection{Admissible subcategories and hearts}\label{sec_hearts}

Let $\TT$ be the derived category of an abelian category, and let $\KK$ be an admissible subcategory of $\TT$. Suppose that there exists a heart $\AA_\TT$ of a bounded t-structure on $\TT$, and the intersection of $\AA_\TT$ with $\KK$ induces a heart $\AA$ of a bounded t-structure on $\KK$. We denote by $(\DD^{\leq 0}, \DD^{\geq 0})$ the bounded t-structure on $\KK$ whose heart is $\AA$.

The following lemma is a direct consequence of \cite{Beilinson1987}.

\begin{lem} \label{lemma_defF}
There exists a t-exact functor $F:\Db(\AA) \to \KK$, whose restriction to the heart $\AA\subset\Db(\AA)$ is identity to the heart $\AA\subset\KK$.
\end{lem}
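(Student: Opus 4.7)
The plan is to construct $F$ as Beilinson's realization functor for the bounded t-structure on $\KK$ with heart $\AA$. Recall that the machinery of \cite{Beilinson1987} associates to any triangulated category $\TT'$ equipped with a bounded t-structure of heart $\AA'$ and with an f-category (filtered triangulated category) above it, an exact functor $\mathrm{real}_{\AA'}: \Db(\AA') \to \TT'$ which is t-exact (with respect to the standard t-structure on $\Db(\AA')$ and the given t-structure on $\TT'$) and whose restriction to $\AA'$ is the natural inclusion $\AA' \hookrightarrow \TT'$. So the task reduces to exhibiting an f-category over $\KK$.

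Since $\TT = \Db(\BB)$ for some abelian category $\BB$, it carries the classical f-category $\Db\mathrm{F}(\BB)$ of bounded filtered complexes over $\BB$, as described in \cite{Beilinson1987}. I would then define $\KK^F \subset \Db\mathrm{F}(\BB)$ to be the full subcategory consisting of those filtered objects whose associated graded pieces all lie in $\KK$. Using the admissibility of $\KK$ in $\TT$ together with the hypothesis that $\AA_\TT \cap \KK$ induces a heart $\AA$ of a bounded t-structure on $\KK$, one checks that the filtration truncations in $\Db\mathrm{F}(\BB)$ preserve the condition of having graded pieces in $\KK$, and hence that $\KK^F$ inherits the structure of an f-category over $\KK$.

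Once $\KK^F$ is in place, applying Beilinson's theorem to $\KK$ equipped with the t-structure $(\DD^{\leq 0}, \DD^{\geq 0})$ of heart $\AA$ and with f-category $\KK^F$ yields the desired functor $F := \mathrm{real}_{\AA}: \Db(\AA) \to \KK$. Both the t-exactness of $F$ and the identification of its restriction to $\AA$ with the inclusion $\AA \subset \KK$ are part of the general properties of the realization functor: a complex concentrated in degree zero is sent to its underlying object, viewed in $\KK$ via the chain of inclusions $\AA \hookrightarrow \AA_\TT \hookrightarrow \TT$, which by hypothesis factors through $\KK$.

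The only step requiring genuine verification is the construction of the f-category over $\KK$; this is precisely where the admissibility of $\KK$ and the compatibility of the t-structures (namely, that $\AA$ is induced from a heart on $\TT$) enter in an essential way. Everything else is a direct invocation of \cite{Beilinson1987}, which is consistent with the remark preceding the lemma that the statement is a ``direct consequence'' of Beilinson's work.
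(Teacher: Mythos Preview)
Your proposal is correct, but the paper takes a slightly different and somewhat shorter route. Instead of constructing an f-category over $\KK$ directly, the paper applies Beilinson's machinery to the ambient category $\TT$, where the filtered derived category is already available, to obtain the realization functor $F_\TT \colon \Db(\AA_\TT) \to \TT$ for the heart $\AA_\TT$. Since $\AA \subset \AA_\TT$ is thick, $\Db(\AA)$ sits as a full triangulated subcategory of $\Db(\AA_\TT)$, and one simply restricts $F_\TT$ to $\Db(\AA)$. The image of this restriction is generated under shifts and cones by $\AA$, hence is right orthogonal to ${}^{\perp}\KK$ and therefore lands in $\KK$. This yields $F$ with no further work.

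Your approach---defining $\KK^F \subset \Db\mathrm{F}(\BB)$ as filtered objects whose graded pieces lie in $\KK$---is also valid and is a standard way to transfer f-structures along admissible inclusions. It has the advantage of producing an f-category intrinsic to $\KK$, which one would need to realize hearts on $\KK$ that are \emph{not} induced from a heart on $\TT$. The paper's route avoids checking the f-category axioms for $\KK^F$ at the price of relying on the hypothesis that $\AA$ comes from $\AA_\TT$; in this sense the two arguments trade generality for brevity. One small remark: the filtration truncations $\sigma_{\leq n}, \sigma_{\geq n}$ preserve $\KK^F$ tautologically (the surviving graded pieces are among the original ones), so neither admissibility nor t-structure compatibility is needed at that particular step; those hypotheses enter instead to ensure the forgetful functor $\omega$ lands in $\KK$ and that $\AA$ really is a heart.
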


\begin{proof}
Since $\TT$ is the derived category of an abelian category, there exists a filtered derived category over $\TT$. This gives an exact functor $F_\TT:\Db(\AA_\TT) \to \TT$, which is t-exact with respect to the standard t-structure on $\Db(\AA_{\TT})$ and that defining the heart $\AA_{\TT}$ in $\TT$, whose restriction to $\AA_\TT$ is the identity \cite[Statement A 6]{Beilinson1987}.

Now $\Db(\AA)$ is a triangulated subcategory of $\Db(\AA_\TT)$ since $\AA \subset \AA_\TT$ is thick and the restriction of $F_\TT$ to $\Db(\AA)\subset \Db(\AA_\TT)$ is identity on $\AA$, so the image of $F_\TT$ is right orthogonal to $^{\perp}\KK\subset \TT$, which implies that the image lies in $\KK$. Hence this restriction gives $F$ with the desired properties.
\end{proof}

\begin{rmk}
The assumption that $\TT$ is the derived category of an abelian category is used to ensure the existence of a filtered derived category over it, which allows to construct the functor using \cite{Beilinson1987}. Alternatively, we can assume that $\TT$ is the homotopy category of a stable $\infty$-category to obtain a similar result.
\end{rmk}

To see when the functor $F$ is an equivalence, we will use the following lemma, which is well-known to the experts, also see in \cite[IV, Exercise 2]{GM}.

\begin{lem}\label{lem:ext_generation}
The functor $F:\Db(\AA) \to \KK$ is an equivalence if and only if for any two objects $A,B$ in $\AA$, and any morphism $f\in \Hom_{\KK}(A, B[n])$ for $n \geq 2$, there exist objects $A_0=A$, $A_1$, $A_2$, ... ,$A_n=B$ in $\AA$, and morphisms $f_i \in \Hom_{\KK}(A_{i-1},A_i[1])$ for $i=1,2,...,n$, such that $f$ is the composition of the $f_i$'s.
\end{lem}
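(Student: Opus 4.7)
The plan is to prove the two implications separately; the forward direction is routine while the backward direction is where the work lies. For the forward direction, if $F$ is an equivalence then $\Hom_{\KK}(A,B[n]) \cong \Hom_{\Db(\AA)}(A,B[n]) = \Ext^n_{\AA}(A,B)$, and by the Yoneda description every element of $\Ext^n_{\AA}(A,B)$ is represented by an $n$-fold extension $0 \to B \to E_1 \to \cdots \to E_n \to A \to 0$ in $\AA$, which splices into $n$ short exact sequences whose boundary morphisms give degree-one maps in $\Db(\AA)$ composing to the given class. Applying $F$ transports this factorization to $\KK$.

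For the backward direction, I would first establish that $F$ is fully faithful on the heart, i.e.\ that $F$ induces an isomorphism $\Ext^n_{\AA}(A,B) \to \Hom_{\KK}(A,B[n])$ for all $A,B\in\AA$ and $n \geq 0$. The cases $n \leq 1$ are automatic: for $n < 0$ both sides vanish; for $n = 0$ both equal $\Hom_{\AA}(A,B)$ since $F$ is the identity on $\AA$; and for $n = 1$ both equal $\Ext^1_{\AA}(A,B)$ because $\AA$ is the heart of a bounded t-structure in each category. For $n \geq 2$, surjectivity is immediate from the hypothesis together with the $n=1$ isomorphism: any $f \in \Hom_{\KK}(A,B[n])$ decomposes as $f_n \circ \cdots \circ f_1$ with $f_i \in \Hom_{\KK}(A_{i-1},A_i[1])$, each $f_i$ lifts uniquely to $\Db(\AA)$, and the composition of the lifts maps to $f$. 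Injectivity I would prove by induction on $n$: given $g \in \Ext^n_{\AA}(A,B)$ with $F(g) = 0$, represent $g$ by an $n$-extension, splice off the first short exact sequence $0 \to B \to E_1 \xrightarrow{p} K \to 0$, and write $g = \alpha[n-1] \circ \beta$ with $\alpha \in \Ext^1_{\AA}(K,B)$ and $\beta \in \Ext^{n-1}_{\AA}(A,K)$. The long exact sequence obtained from applying $\Hom_{\KK}(A,-)$ to the triangle $B \to E_1 \xrightarrow{p} K \xrightarrow{\alpha} B[1]$, combined with the relation $F(\alpha)[n-1] \circ F(\beta) = 0$, produces a lift $\tilde\gamma \in \Hom_{\KK}(A,E_1[n-1])$ with $p[n-1] \circ \tilde\gamma = F(\beta)$; the inductive isomorphism at level $n-1$ then provides $\gamma \in \Ext^{n-1}_{\AA}(A,E_1)$ with $F(\gamma) = \tilde\gamma$ and forces $\beta = p \circ \gamma$ in $\Db(\AA)$, whence $g = \alpha[n-1] \circ p[n-1] \circ \gamma = 0$ because $\alpha \circ p = 0$ in the distinguished triangle.

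Once full faithfulness on the heart is in hand, I would upgrade to full faithfulness on all of $\Db(\AA)$ by the standard dévissage using the $t$-exactness of $F$, the cohomology truncation triangles, and the five lemma applied to the resulting long exact sequences. Essential surjectivity then follows from the boundedness of the t-structure on $\KK$: by induction on the cohomological amplitude of $K \in \KK$, the truncation triangle $\tau_{\leq b-1}K \to K \to H^b(K)[-b] \xrightarrow{\delta} \tau_{\leq b-1}K[1]$ in $\KK$ has both outer terms in the essential image of $F$, and the full faithfulness just established lifts $\delta$ to $\Db(\AA)$, realizing $K$ as the $F$-image of the cone.

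The main obstacle is the injectivity step in the second paragraph. Surjectivity of the map $\Ext^n_{\AA} \to \Hom_{\KK}(A,B[n])$ is essentially a rephrasing of the hypothesis, but injectivity requires the careful splicing of an $n$-extension into a short exact sequence and an $(n-1)$-extension, together with the diagram chase invoking the long exact sequence and the vanishing of consecutive morphisms in a distinguished triangle to propagate the inductive hypothesis. Everything else reduces to standard dévissage once this technical point is cleared.
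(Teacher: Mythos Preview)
Your proposal is correct and follows essentially the same argument as the paper: the forward direction via Yoneda, the backward direction by establishing bijectivity of $F^n_{A,B}$ for $A,B\in\AA$ (with the same induction for injectivity and the same truncation argument for essential surjectivity). The only cosmetic difference is that in the injectivity step you splice the Yoneda $n$-extension at the end near $B$ (writing $g=\alpha[n-1]\circ\beta$ with $\alpha\in\Ext^1$), whereas the paper splices at the end near $A$ (writing $f'=f_2'[1]\circ f_1'$ with $f_1'\in\Ext^1$); the two diagram chases are dual and equally valid.
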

\begin{proof}
We outline the proof for the sake of completeness. Assume that $F$ is an equivalence. Then $F$ induces an isomorphism
$$F_{A,B}^n \colon \Hom_{\Db(\AA)}(A, B[n]) \cong \Hom_\KK(A, B[n])$$
for every pair of objects $A$, $B$ in $\Db(\AA)$, $n \in \Z$. If $A$, $B \in \AA$, then by definition 
$$\Hom_{\Db(\AA)}(A, B[n])= \Ext^n_{\AA}(A, B)$$
and by Yoneda interpretation (see \cite[III.5.Theorem (c)]{GM}), every extension $f' \in \Ext^n_{\AA}(A, B)$ is generated by extensions in $\Ext^1_{\AA}$, in other words, there exist objects $A_0=A$, $A_1$, $A_2$, ... ,$A_n=B$ in $\AA$, and morphisms $f_i' \in \Ext_{\AA}^1(A_{i-1},A_i)$ for $i=1,2,...,n$, such that 
$$f'= f_n'[n-1] \circ \dots f_2'[1] \circ f_1'.$$
Thus for every $f \in \Hom_\KK(A, B[n])$, there exists $f' \in \Ext^n_{\AA}(A, B)$ such that $f=F_{A,B}^n(f')$ and $f'$ is the composition of extensions $f_i'$ as above. Setting $f_i:=F_{A,B}^1(f_i')$, since $F$ is a functor, we have  
$$f=F_{A,B}^n(f')=F_{A,B}^1(f_n')[n-1] \circ \dots F_{A,B}^1(f_2')[1] \circ F_{A,B}^1(f_1')=f_n[n-1] \circ \dots f_2[1] \circ f_1$$
where $f_i \in \Hom_{\KK}(A_{i-1}, A_i[i])$. This proves the first implication.

On the other hand, assume the second condition holds. We first show that $F$ is fully faithful. Indeed, by definition of $\Db(\AA)$, it is enough to show that $F_{A,B}^n$ is an isomorphism for every $A$, $B$ in $\AA$, $n \in \Z$. Note that
$$\Hom_{\KK}(A, B[-n])=0= \Ext^{-n}_{\AA}(A, B) \text{ for }n>0$$
since $\AA$ is the heart of a bounded t-structure, and
$\Hom_{\AA}(A, B) \cong \Hom_{\KK}(A, B)$ since $F$ is the identity on $\AA$. Now note that every $f \in \Hom_{\KK}(A, B[1])$ corresponds to a triangle
$$A \xrightarrow{f} B[1] \to \text{Cone}(f) \xrightarrow{+}.$$
Then $C:=\text{Cone}(f)[-1]$ is in $\AA$, since $A$ and $B$ are. Thus $C$ corresponds to the extension
$$0 \to B \to C \to A \to 0$$
in $\AA$ and defines $f' \in \Ext^1_{\AA}(A, B) = \Hom_{\Db(\AA)}(A, B[1])$. Since $F$ is the identity on $\AA$, it follows that $F_{A,B}^1(f')=f$. If $n \geq 2$, by assumption every $f \in \Hom_{\KK}(A, B[n])$ can be written as a composition
$$f= f_n[n-1] \circ \dots f_2[1] \circ f_1$$
with $f_i \in \Hom_{\KK}(A_{i-1}, A_i[1])$ and $A_0=A$, $A_1$, $A_2$, ... ,$A_n=B$ in $\AA$. Since $F$ induces an identification $\Hom_{\KK}(A_{i-1}, A_i[1]) \cong \Ext^1_{\AA}(A_{i-1}, A_i)$, there exists $f_i' \in \Ext^1_{\AA}(A_{i-1}, A_i)$ such that $f_i=F_{A,B}^1(f_i')$. Setting $f':=f_n'[n-1] \circ \dots f_2'[1] \circ f_1'$, we have 
$$F_{A,B}^n(f')=F_{A,B}^1(f_n')[n-1] \circ \dots F_{A,B}^1(f_2')[1] \circ F_{A,B}^1(f_1')=f_n[n-1] \circ \dots f_2[1] \circ f_1=f.$$
This shows that 
$$F_{A,B}^n \colon \Ext^n_{\AA}(A, B)= \Hom_{\Db(\AA)}(A, B[n]) \to \Hom_{\KK}(A, B[n])$$
is surjective for every $n$. 

Note that $F_{A,B}^n$ is also injective. To show this, we argue by induction on $n$. The case $n \leq 1$ has already been shown. Let $n \geq 2$ and assume $F_{A,B}^m$ is injective for every $m < n$. Let $f' \in \Ext^n_{\AA}(A, B)$ such that $F_{A,B}^n(f')=0$. By Yoneda interpretation $f'$ is of the form
$$f'= f_2'[1] \circ f_1' \colon A \to A_1[1] \to B[n],$$
where $f_1' \in \Ext_{\AA}^1(A,A_1)$, $A_1 \in \AA$ and $f_2' \in \Ext^{n-1}_{\AA}(A_1, B)$. Set $K:=\text{Cone}(f_1')[-1]$. Then we have the short exact sequence 
$$0 \to A_1 \xrightarrow{a} K \to A \to 0$$
in $\AA$ and the exact triangle
$$A \xrightarrow{f_1'} A_1[1] \xrightarrow{a[1]} K[1] \xrightarrow{+}$$
in $\Db(\AA)$. Then we have
$$0=F_{A,B}^n(f')=F_{A,B}^n(f_2'[1] \circ f_1')=F_{A,B}^{n-1}(f_2')[1] \circ F_{A,B}^1(f_1').$$
It follows that $F_{A,B}^{n-1}(f_2')[1]$ lifts to a morphism $g[1] \in \Hom_{\KK}(K[1], B[n])$ such that $$F_{A,B}^{n-1}(f_2')[1]=g[1] \circ F_{A,B}^0(a)[1].$$ 
Since $F_{A,B}^{n-1}$ is surjective, there exists $g' \in \Ext^{n-1}_{\AA}(K, B)$ such that $F_{A,B}^{n-1}(g')=g$. It follows that
$$F_{A,B}^{n-1}(f_2')[1]=F_{A,B}^{n-1}(g')[1] \circ F_{A,B}^0(a)[1]=F_{A,B}^{n-1}(g' \circ a)[1].$$ 
By induction hypothesis, the map $F_{A,B}^{n-1}$ is injective, so we deduce that $f_2'=g' \circ a$. Then we have
$$f'= f_2'[1] \circ f_1'= g'[1] \circ a[1] \circ f_1'=0$$
since $a[1] \circ f_1'=0$. We conclude that $F_{A,B}^n$ is an isomorphism for every $n$, and thus $F$ is fully faithful. 

We now show that $F$ is essentially surjective. We argue as in \cite[Section 3.1.15]{BBD}. 
By definition of bounded t-structure, we have
$$\KK= \bigcup_{a \leq b} \DD^{[a,b]},$$
thus if $K \in \KK$, then $K \in \DD^{[a,b]}$ for some $a \leq b$. We argue by induction on $l=b-a \geq 0$. If $l=0$, then $K=A[a]$ for some $A \in \AA$. Since $F$ is the identity on $\AA$, the object $K$ is in the essential image of $F$. If $l \geq 1$, assume the statement holds for every non negative integer $< l$. Take $a \leq c < b$ and consider the truncation functors $\tau_{\leq c}$ and $\tau_{> c}$. Then we have the triangle
$$\tau_{\leq c}K \to K  \to \tau_{>c}K \xrightarrow{f} \tau_{\leq c}K[1].$$
By the induction hypothesis, there exist $K_1$, $K_2 \in \Db(\AA)$ such that $F(K_1)=\tau_{\leq c}K$ and $F(K_2)=\tau_{>c}K$. Since $F$ is fully faithful there exists $f' \colon K_2 \to K_1[1]$ such that $F(f') \cong f$. Then applying $F$ to the triangle
$$\text{Cone}(f')[-1] \to K_2 \xrightarrow{f'} K_1[1] \xrightarrow{+},$$
we get the commutative diagram
$$
\xymatrix{
F(\text{Cone}(f')[-1]) \ar[r] \ar@{-->}[d]& F(K_2) \ar[r]^{F(f')} \ar[d]^{\cong} & F(K_1)[1] \ar[r]^-+ \ar[d]^{\cong}& \\
K \ar[r] & \tau_{>c}K \ar[r]^{f} & \tau_{\leq c}K[1] \ar[r]^-+ & .
}
$$
By Axiom TR3 of triangulated categories, we have an induced morphism $F(\text{Cone}(f')[-1] \to K$ which is an isomorphism. We conclude that $F$ is essentially surjective, and thus $F$ is an equivalence as we wanted.
\end{proof} 
 
The key observation is that to ensure the condition in Lemma \ref{lem:ext_generation}, it suffices to have a stability condition on $\KK$ with certain special properties. More precisely, suppose that there exists a stability condition $\sigma$ on $\KK$ with heart $\AA$. Denote by $Z$ the central charge of $\sigma$ and by $\mu$ the associated slope. Further assume the following holds:

\begin{customthm}{$(\star)$} \label{star}
$\quad$ 
\begin{enumerate}
    \item[\rm{(a)}] The image of the central charge $Z \colon \mathrm{K}_{\mathrm{num}}(\mathcal K)\rightarrow \C$ is discrete.

    \item[\rm{(b)}] For every non-zero object $E$ in $\mathcal A$ and every real number $s_0$, there exists a $\sigma$-stable object $F$ in $\mathcal A$ satisfying $\mu(F)<s_0$
    and  $\Hom_{\KK}(F,E)\neq 0$.
    
    \item[\rm{(c)}] For any $\sigma$-stable objects $E$ and $F$ in $\mathcal A$, we have $\Hom_{\KK}(E,F[m])=0$ for $m\geq 3$. If $\mu(E)<\mu(F)$ in addition, then we have $\Hom_{\KK}(E,F[2])=0$.
\end{enumerate}
\end{customthm}
We will write $Z=-\mathrm{deg}+i\mathrm{rk}$. Denote by $\delta_0:=\inf\{\mathrm{rk}(E)|E\in \mathcal K, \mathrm{rk}(E)> 0\}$. It is easy to that when $\delta_0\neq0$, the image of $\mathrm{rk}$ in $\R$ is discrete.

We say that $\sigma$ satisfies the Assumption \ref{star} if it has central charge with $\delta_0\neq 0$ satisfying (a-c) as above; or it has central charge with $\delta_0= 0$ and there exists an open neighbourhood $U$ in $\widetilde{\text{GL}}^+(2, \R)$ of the lift of the identity  such that $\sigma\cdot\tilde g$ satisfies  (a-c) for every $\tilde g\in U$. 

Our goal is to show that under the above assumptions, the condition in Lemma \ref{lem:ext_generation} is satisfied, and the functor $F:\Db(\AA) \to \KK$ is an equivalence.


\begin{lem}\label{lem:lowboundspaning}
Let $\sigma$ be a stability condition on $\KK$ satisfying the Assumption \ref{star}. Then for every non-zero object $A$ in $\mathcal A$ and every real number $s$, there exists an object $C$ in $\mathcal A$ satisfying:
\begin{enumerate}
    \item $\mu^+(C)<s$;
    \item there exists a surjective morphism $C\rightarrow A$ in $\mathcal A$.
\end{enumerate}
\end{lem}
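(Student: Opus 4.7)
I would build $C$ by an iterative procedure producing a strictly ascending chain $B_0\subsetneq B_1\subsetneq\cdots\subseteq A$ of images, together with compatible morphisms $\phi_k\colon C_k\to A$ satisfying $C_k\in\AA$, $\mu^+(C_k)<s$, and $\mathrm{im}(\phi_k)=B_k$. Start from $C_0=0$, $\phi_0=0$; at stage $k+1$, if $B_k=A$ then $C:=C_k$ works. Otherwise, set $K_k:=\ker(\phi_k)\subseteq C_k$, so $\mu^+(K_k)\leq\mu^+(C_k)<s$, and apply Assumption~\ref{star}\,(b) to $A/B_k\neq 0$ with bound $s_0:=\min(s,\mu^-(K_k))$ (using the convention $\mu^-(0)=+\infty$) to obtain a $\sigma$-stable $F_{k+1}\in\AA$ of slope $<s_0$ with a nonzero $g_{k+1}\colon F_{k+1}\to A/B_k$.

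The heart of the step is a lifting argument in $\KK$. Form the pullback $P_{k+1}:=A\times_{A/B_k}F_{k+1}$ in $\AA$, giving an extension $0\to B_k\to P_{k+1}\to F_{k+1}\to 0$ classified by some $\eta\in\Hom_\KK(F_{k+1},B_k[1])$ together with a map $P_{k+1}\to A$ whose image strictly contains $B_k$. I would lift $\eta$ through $\phi_k\colon C_k\twoheadrightarrow B_k$ to a class $\tilde\eta\in\Hom_\KK(F_{k+1},C_k[1])$; the obstruction lies in $\Hom_\KK(F_{k+1},K_k[2])$, arising from composition of $\eta$ with the boundary of the triangle $K_k\to C_k\to B_k\to K_k[1]$. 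Filtering $K_k$ via its Harder--Narasimhan and Jordan--H\"older factors, each $\sigma$-stable of slope $\geq\mu^-(K_k)>\mu(F_{k+1})$, and applying Assumption~\ref{star}\,(c) with $m=2$ to each factor successively, this obstruction vanishes, so $\tilde\eta$ exists. The resulting triangle $C_k\to C_{k+1}\to F_{k+1}\xrightarrow{\tilde\eta}C_k[1]$ has $C_{k+1}\in\AA$ (heart is extension-closed); a morphism-of-triangles argument (epi--epi version of the five lemma applied to the compatible pair of extensions) yields an epimorphism $C_{k+1}\twoheadrightarrow P_{k+1}$ in $\AA$, and composing with $P_{k+1}\to A$ defines $\phi_{k+1}$ with image $B_{k+1}\supsetneq B_k$ and $\mu^+(C_{k+1})\leq\max(\mu^+(C_k),\mu(F_{k+1}))<s$.

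Finally I have to show the chain $B_k$ terminates. By Assumption~\ref{star}\,(a) the ranks $\mathrm{rk}(B_k)\in[0,\mathrm{rk}(A)]$ lie in a discrete subset and stabilize at some $r^*$ after finitely many steps $k\geq N$; from that point the successive quotients $B_{k+1}/B_k$ have rank $0$ and strictly positive degrees drawn from a discrete set. Translating this to an ascending chain of rank-zero subobjects of $A/B_N$ and bounding it above by the maximal rank-zero subobject of $A/B_N$ (obtained from the top pieces of its HN filtration, which has finite degree), a degree-counting argument forces termination at some $B_N=A$, and then $C:=C_N$ completes the proof. In the case $\delta_0=0$, I would run the same argument after replacing $\sigma$ by a nearby $\sigma\cdot\tilde g$ satisfying $(\star)(a)$--(c), noting that the set of semistable objects is preserved and slightly adjusting $s$. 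The main obstacle is this termination step: controlling the torsion part of $A/B_N$ from $(\star)(a)$ together with the HN structure requires careful bookkeeping, whereas the lifting step is a clean consequence of $(\star)(c)$ and the cone formalism in $\KK$.
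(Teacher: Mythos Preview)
Your argument is correct and shares the same two load-bearing ingredients with the paper's proof: Assumption~(c) to kill the $\Hom^2$-obstruction when lifting an extension class along a surjection with kernel of larger minimal slope, and the discreteness in Assumption~(a) to force termination. The organizational scheme, however, is genuinely different. The paper argues by lexicographic induction on $(\mathrm{rk}(A),\deg(A))$: it picks a stable $C_1$ with a nonzero map to $A$, writes $A$ as an extension $0\to A_1\to A\to A_2\to 0$ with $A_1$ the image, applies the inductive hypothesis to the strictly smaller $A_2$ to get $C_2\twoheadrightarrow A_2$, and then lifts the connecting map $C_2\to A_1[1]$ through $C_1\twoheadrightarrow A_1$ (this is exactly your obstruction computation) to assemble $C$ via the octahedral axiom. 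Your approach instead runs a bottom-up iteration inside $A$, growing the image $B_k$ one stable piece at a time and tracking the kernel $K_k$ to control the next slope bound; termination then becomes an \emph{a posteriori} length argument rather than being baked into the induction. The paper's scheme is slightly cleaner because the invariant $(\mathrm{rk},\deg)$ drops at every step and no separate termination analysis is needed; your scheme is more explicitly constructive but pays for it with the bookkeeping you flag. Incidentally, that termination step is less delicate than you fear: once $\mathrm{rk}(B_k)$ stabilizes, the rank-zero subobjects $B_k/B_N$ of $A/B_N$ all sit inside the phase-$1$ piece of its HN filtration, whose degree is a fixed upper bound, and the successive degree increments are bounded below by the generator of $\mathrm{Im}(Z)\cap\R$. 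For $\delta_0=0$ both proofs do the same thing---tilt to a nearby $\sigma\cdot\tilde g$ with $\delta_0\neq 0$---but you should note explicitly that a small enough rotation keeps $A$ in the new heart and that the resulting surjection, having kernel of bounded phase, remains a surjection in the original heart; the paper spells this out.
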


\begin{proof}
We first prove the case of $\delta_0\neq 0$. We may assume that image of $Z$ is $\Z+i\Z$. Make induction on $(\mathrm{rk},\mathrm{deg})$ of $A$ with lexicographic order. When $(\mathrm{rk}(A),\mathrm{deg}(A))=(0,1)$, then $A$ is stable. By Assumption (b), there exists a $\sigma$-stable object $C$ in $\mathcal A$ with $\mu(C)<s$ and  $\mathrm{Hom}_{\KK}(C,A)\neq 0$. Since $(\mathrm{rk}(A),\mathrm{deg}(A))=(0,1)$, the object $A$ has no non-trivial quotient object in $\mathcal A$, as it is simple. Therefore, every non-zero morphism from $C$ to $A$ is surjective. 

Now assume the statement holds for all objects with $\mathrm{rk}=0$ and $\mathrm{deg}<m$. Then when $A$ has $(\mathrm{rk},\mathrm{deg})=(0,m)$, by Assumption (b), there is a $\sigma$-stable object $C_1$ with $\mu(C_1)<s$ and $\mathrm{Hom}_{\KK}(C_1,A)\neq 0$. Choose a non-zero morphism, and denote its image in $A$ as $A_1$ and the kernel as $K_1$. In particular, $\mu^+(K_1)<s$ as well.

If $A_1=A$, then there is nothing to prove. Otherwise, we may write $A$ as a short exact sequence $$0\rightarrow A_1\rightarrow A\rightarrow A_2\rightarrow 0$$ in $\mathcal A$, where both $A_1$ and $A_2$ have $\mathrm{rk}=0$ and $\deg<m$. 


By induction, there exists $C_2$ with
$\mu^+(C_2)<\mu^-(K_1)$ and a short  exact sequence $0\rightarrow K_2\rightarrow C_2\rightarrow A_2\rightarrow 0$ in $\mathcal A$. By Assumption (c), $\mathrm{Hom}_{\KK}(C_2,K_1[2])=0$. Therefore, the composition $e:C_2\rightarrow A_2\rightarrow A_1[1]$ 
lifts to $\tilde e: C_2\rightarrow C_1[1]$. In particular, $\tilde e$ commutes the diagram:
\begin{equation}
	\begin{tikzcd}
		C_2 \arrow{d} \arrow{r}{\tilde e}
		&  C_1[1] \arrow{d}\\
		A_2\arrow{r} & A_1[1].
	\end{tikzcd}
\end{equation}
By the octahedral axiom, we have the distinguished triangle
$$K\rightarrow C\rightarrow A\xrightarrow {+},$$ where $C$ is Cone$(C_2[-1]\xrightarrow{\tilde e[-1]}C_1)$ and $K\cong$Cone$(K_2[-1]\xrightarrow{}K_1)$. In particular, both $C$ and $K$ are in $\mathcal A$ with $$\mu^+(C),\mu^+(K)<\max\{\mu^+(C_i),\mu^+(K_i)\}<s.$$

Now we have finish the induction for the case of $\mathrm{rk}=0$. We may assume that the statement holds for all objects with $\mathrm{rk}<r$. When $\mathrm{rk}(A)=r$, we may filterize $A$ as the short exact sequence
$$0\rightarrow A_+\rightarrow A\rightarrow A_-\rightarrow 0$$ in $\mathcal A$, where $\mathrm{rk}(A_+)=0$ and $\mu^+(A_-)<+\infty$. By induction and the same argument as above, if the statement holds for $A_-$, then it will hold for $A$.

Now we may assume $\mu^+(A)<+\infty$. By Assumption (b), there exists a $\sigma$-stable object $C_1$ with $\mu^+(C_1)<s$ and $\Hom_{\KK}(C_1,A)\neq 0$. Choose a non-zero morphism, and denote its image in $A$ as $A_1$ and the quotient as $A_2$. Since $\mu^+(A)<+\infty$, we must have $\mathrm{rk}(A_1)>0$. Hence,  $\mathrm{rk}(A_2)<\mathrm{rk}(A)$. By induction, the statement holds for $A_2$. By the same argument as that in the rank $0$ case, the statement holds for $A$.\\

We then prove the case of $\delta_0=0$. Let $t\in(0,1)$ be a real number sufficiently small such that:
\begin{enumerate}
    \item $-\cot(\pi t)<\min\{s,\mu^-(A)\}$;
    \item the image of $e^{-\pi it}Z$ is infinite on the real axis  and $\sigma_t=(\PP_{\sigma}((t,t+1]),e^{-\pi it}Z)$ satisfies Assumption \ref{star} with $\delta_0\neq 0$.
\end{enumerate}
 Since $-\cot(\pi t)<\mu^-(A)$, the object $A$ is in $\PP_{\sigma}((t,1])\subset\PP_{\sigma}((t,t+1])$. By the statement in the $\delta_0\neq 0$ case, there exists an object $C$ in $\PP_{\sigma}((t,t+1])$ with $\mu_{\sigma_t}^+(C)<-\cot(\cot^{-1}(-s)-t\pi)$ and a surjective morphism $f:C\rightarrow A$ in $\PP_{\sigma}((t,t+1])$.

Note that the object $C$ is in $\PP_{\sigma}((t,1])\subset \AA$ and $\mu_{\sigma}^+(C)=-\cot(\cot^{-1}(-\mu_{\sigma_t}^+(C))+t\pi)<s$. The kernel of $f$ is in $\PP_\sigma((t,s))$. Therefore, the morphism $f$ is surjective in $\AA$ as well. We finish the proof of the statement.
\end{proof}

\begin{cor}
\label{ext2generatedbyext1}
Let $\sigma$ be a stability condition on $\KK$ satisfying the Assumption \ref{star}. Then for every $A, B$ in $\AA$ we have that $\Hom_{\KK}(A, B[2])$ is generated by compositions of extensions between objects in $\AA$.
\end{cor}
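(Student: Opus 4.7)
The plan is to show that any single morphism $f \in \Hom_{\KK}(A, B[2])$ can be written as a Yoneda composition $f = f_2[1] \circ f_1$, with $f_1 \in \Hom_{\KK}(A, K[1])$ corresponding to an extension class in $\Ext^1_{\AA}(A, K)$ and $f_2 \in \Hom_{\KK}(K, B[1])$ corresponding to an extension class in $\Ext^1_{\AA}(K, B)$, for a suitable auxiliary object $K \in \AA$.

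First I would pick a real number $s$ strictly less than $\mu^{-}(B)$, the minimum slope among the Harder--Narasimhan factors of $B$ (any real $s$ works if all those slopes are $+\infty$). By Lemma \ref{lem:lowboundspaning} applied to $A$ and $s$, there exist $C \in \AA$ with $\mu^{+}(C) < s$ and a surjection $\pi \colon C \twoheadrightarrow A$ in $\AA$. Setting $K := \ker \pi$, the short exact sequence $0 \to K \to C \xrightarrow{\pi} A \to 0$ yields an exact triangle $K \to C \xrightarrow{\pi} A \xrightarrow{\delta} K[1]$ in $\KK$, whose connecting morphism $\delta$ is, via the Yoneda interpretation, the class of this extension in $\Ext^1_{\AA}(A, K)$. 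Applying $\Hom_{\KK}(-, B[2])$ yields the exact sequence
\[
\Hom_{\KK}(K[1], B[2]) \xrightarrow{\delta^{*}} \Hom_{\KK}(A, B[2]) \xrightarrow{\pi^{*}} \Hom_{\KK}(C, B[2]).
\]

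The crucial step is to show that $\Hom_{\KK}(C, B[2]) = 0$. I would establish this by a standard dévissage along the HN filtrations of $C$ and $B$ combined with the JH filtrations of each semistable piece: iterating the long exact sequence in both arguments and invoking the vanishings $\Hom_{\KK}(E, F[m]) = 0$ for $m \geq 3$ and all $\sigma$-stable $E, F$ to kill the obstruction terms at each step, the problem reduces to showing $\Hom_{\KK}(E, F[2]) = 0$ for each pair of stable JH factors $E$ of $C$ and $F$ of $B$. By construction every such pair satisfies $\mu(E) \leq \mu^{+}(C) < s \leq \mu^{-}(B) \leq \mu(F)$, and Assumption \ref{star}(c) then supplies the required vanishing.

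Once $\Hom_{\KK}(C, B[2]) = 0$ is in hand, $\delta^{*}$ is surjective, so $f$ lifts to some $g \in \Hom_{\KK}(K[1], B[2]) = \Hom_{\KK}(K, B[1])$; writing $g = f_2[1]$ with $f_2 \in \Hom_{\KK}(K, B[1]) \cong \Ext^1_{\AA}(K, B)$ (by Yoneda, since $B, K \in \AA$), the identity $f = g \circ \delta = f_2[1] \circ \delta$ is exactly the claimed decomposition into a composition of two extensions in $\AA$. The main obstacle I anticipate is the clean bookkeeping in the dévissage, since the long exact sequence must be iterated in both variables and at each inductive step both the $\Hom^2$-vanishing (which needs the slope inequality) and the $\Hom^3$-vanishing of Assumption \ref{star}(c) are required to propagate the vanishing along the filtrations; apart from this, the argument is mechanical once Lemma \ref{lem:lowboundspaning} has produced the auxiliary object $C$.
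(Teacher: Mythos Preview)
Your proposal is correct and follows essentially the same argument as the paper: pick $C$ via Lemma \ref{lem:lowboundspaning} with $\mu^+(C)<\mu^-(B)$, take $K=\ker(C\twoheadrightarrow A)$, and use the long exact sequence together with Assumption \ref{star}(c) to get $\Hom_{\KK}(C,B[2])=0$ and hence the surjectivity of $\Hom_{\KK}(K,B[1])\to\Hom_{\KK}(A,B[2])$. Your write-up is in fact more explicit than the paper's on the d\'evissage needed to reduce the vanishing of $\Hom_{\KK}(C,B[2])$ to stable factors, which the paper leaves implicit.
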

\begin{proof}
Let $s=\mu^-(B)>-\infty$, we may pick $C$ as that in Lemma \ref{lem:lowboundspaning}. Let $f: C\to  A$ be the surjective morphism and denote by $K$ the kernel of $f$ in $\mathcal A$. Applying $\Hom_{\KK}(-,B)$ to the short exact sequence $0 \to K \to C \to A \to 0$, we get
$$ \dots \to \Hom_{\KK}(K, B[1]) \to \Hom_{\KK}(A, B[2]) \to \Hom_{\KK}(C, B[2])\to \dots.$$
By the choice of $C$ and Assumption (c), $\Hom_{\KK}(C,B[2])=0$. In particular, the last map $\Hom_{\KK}(K, B[1]) \to \Hom_{\KK}(A, B[2])$ is a surjection and the claim holds.
\end{proof}

As a consequence of the previous computations we get our first main result.

\begin{thm} \label{thm_derivedcategoryofheart}
Let $\TT$ be the derived category of an abelian category. Let $\KK$ be an admissible subcategory of $\TT$ having a stability condition $\sigma=(\AA, Z)$, whose heart $\AA$ is induced from a heart $\AA_{\TT}$ on $\TT$ and satisfying the Assumption \ref{star}. Then the functor $F \colon \Db(\AA) \to \KK$ defined in Lemma \ref{lemma_defF} is an equivalence. 
\end{thm}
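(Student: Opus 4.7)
By Lemma \ref{lem:ext_generation}, it suffices to check that for every pair $A, B \in \AA$ and every integer $n \geq 2$, each morphism $f \in \Hom_\KK(A, B[n])$ factors as a composition of $n$ morphisms $f_i \in \Hom_\KK(A_{i-1}, A_i[1])$ with intermediate objects $A_0 = A, A_1, \ldots, A_n = B$ in $\AA$. I would split this into the cases $n = 2$ and $n \geq 3$, both of which eventually reduce to Assumption \ref{star}(c).

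The case $n = 2$ is precisely Corollary \ref{ext2generatedbyext1}: Lemma \ref{lem:lowboundspaning} applied with $s = \mu^-(B)$ produces a surjection $C \twoheadrightarrow A$ in $\AA$ with $\mu^+(C) < \mu^-(B)$; after filtering $C$ and $B$ by their Harder--Narasimhan and Jordan--H\"older factors, the vanishing for $\sigma$-stable pairs with $\mu(E) < \mu(F)$ in Assumption \ref{star}(c) gives $\Hom_\KK(C, B[2]) = 0$; consequently the connecting map in the long exact sequence attached to $0 \to K \to C \to A \to 0$ lifts $f$ to $\Hom_\KK(K, B[1])$ and expresses $f$ as a product of two extensions through the canonical map $A \to K[1]$.

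For $n \geq 3$ the strategy is to prove the stronger vanishing $\Hom_\KK(A, B[n]) = 0$, rendering the composition condition vacuous. Both $A$ and $B$ admit finite Harder--Narasimhan filtrations in $\AA$ with $\sigma$-semistable factors, and each semistable factor admits a Jordan--H\"older filtration into $\sigma$-stable objects. Iterating the long exact sequences of $\Hom_\KK(-,-)$ attached to these filtrations reduces the claim to the unconditional vanishing $\Hom_\KK(E, F[n]) = 0$ for $\sigma$-stable $E, F \in \AA$ and $n \geq 3$, which is the first half of Assumption \ref{star}(c).

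The main bookkeeping obstacle is the variant $\delta_0 = 0$ of Assumption \ref{star}, where conditions (a)--(c) hold only after a small twist $\sigma \cdot \widetilde g \in U$. Lemma \ref{lem:lowboundspaning} already handles this twist internally and returns a surjection in the original heart $\AA$, so the $n = 2$ step is unaffected. For $n \geq 3$ one chooses $t \in (0, 1)$ small enough that both $A$ and $B$ lie in $\PP_\sigma((t, 1]) \subset \PP_\sigma((t, t+1])$, which is possible since the Harder--Narasimhan filtrations of $A$ and $B$ in $\AA$ are finite with positive minimum phase; the reduction to stable objects can then be carried out inside the twisted heart, where Assumption (c) applies verbatim. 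Combining both cases, Lemma \ref{lem:ext_generation} yields that $F$ is an equivalence.
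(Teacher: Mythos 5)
Your proposal is correct and follows essentially the same route as the paper: reduce to the composition condition via Lemma \ref{lem:ext_generation}, invoke Corollary \ref{ext2generatedbyext1} (via Lemma \ref{lem:lowboundspaning} and Assumption $(\star)$(c)) for $n=2$, and for $n \geq 3$ prove the outright vanishing $\Hom_\KK(A,B[n])=0$ by reducing along Harder--Narasimhan and Jordan--H\"older filtrations to $\sigma$-stable objects, where Assumption $(\star)$(c) applies. Your extra discussion of the $\delta_0=0$ case for the $n\geq 3$ vanishing is harmless but superfluous: in the paper's formulation, when $\delta_0=0$ conditions (a)--(c) are required to hold for $\sigma\cdot\tilde g$ for all $\tilde g$ in an open neighbourhood of the identity, and in particular for $\sigma$ itself, so Assumption (c) already applies directly without any twist.
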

\begin{proof}
By Lemma \ref{lem:ext_generation} the functor $F$ defined in Lemma \ref{lemma_defF} is an equivalence if and only if for every $A$, $B$ in $\AA$, $n \geq 2$ we have that $\Hom_{\KK}(A, B[n])$ is generated by degree $1$ extensions of objects in $\AA$. First note that $\Hom_{\KK}(A, B[n])=0$ for every $n \geq 3$. Indeed, up to passing to the stable factors, it is enough to have this vanishing for every pair of $\sigma$-stable objects $A$, $B \in \AA$, which holds by Assumption (c). 

On the other hand, by Corollary \ref{ext2generatedbyext1} we have that $\Hom_{\KK}(A, B[2])$ is generated by compositions of extensions between objects in $\AA$. We conclude that the condition in Lemma \ref{lem:ext_generation} is satisfied and thus $F$ is an equivalence.
\end{proof}

\subsection{Enhancements} \label{sec_enhanc}

Assume $\KK$ satisfies the conditions in Theorem \ref{thm_derivedcategoryofheart}. By \cite{Antieau} the bounded derived category of an abelian category has a unique enhancement (see also \cite{CNS} for the same result without the boundedness condition). Together with Theorem \ref{thm_derivedcategoryofheart} this directly implies that $\KK$ has a unique enhancement. 
 
Using \cite{CS, CNS} we further prove in this section that $\KK$ has a strongly unique enhancement, namely the second part of Theorem \ref{thm_derivedcategoryofheart_intro}.

Let us first recall the notion of almost ample sequence from \cite{CS}.
\begin{dfn}[\cite{CS}, Definition 2.9] \label{def_aaset}
Given an abelian category $\AA$ and a set $I$, a subset $\lbrace C_i \rbrace_{i \in I}$ of objects $C_i \in \AA$ is an \emph{almost ample set} if, for every $A \in \AA$, there exists $i \in I$ satisfying:
\begin{enumerate}
\item[\rm{(i)}] there exist $k \in \mathbb{N}$ and a surjection $C_i^{\oplus k} \twoheadrightarrow A$;
\item[\rm{(ii)}] $\Hom_{\AA}(A, C_i)=0$.
\end{enumerate}
\end{dfn}

Note that by Lemma \ref{lem:lowboundspaning} for any $A \in \AA$ there is $C_A \in \AA$ satisfying conditions (i) and (ii). Indeed, it is enough to choose $s < \mu^-(A)$ and apply Lemma \ref{lem:lowboundspaning} to construct $C_A$ together with a surjection to $A$ and satisfying $\Hom(A, C_A)=0$, as $\mu^+(C_A) < s$. 

Now note that the object $C_A$ only depends on the isomorphism class $[A]$ of $A \in \AA$, namely for $A \cong A' \in \AA$ we have that $C_A$ satisfies (i) and (ii) of Definition \ref{def_aaset} with respect to $A'$. So we change the notation to $C_{[A]}$ and set 
$$I :=\lbrace [A], A \in \AA \rbrace.$$
Since $\AA$ is essentially small, we have that $I$ is a set. Thus the collection
\begin{equation}\label{eq_aaset}
\lbrace C_{[A]} \rbrace_{[A] \in I}    
\end{equation} 
is an almost ample set.

The notion of almost ample set plays a key role in the extension of isomorphisms of functors. The next result is a special case of \cite[Proposition 3.3]{CS}.

\begin{prop}[\cite{CS}, Proposition 3.3] \label{prop_extensionviaampleseq}
Let $\AA$ be an abelian category with finite homological dimension. Assume that $\lbrace C_i \rbrace_{i \in I}$ defines an almost ample set and let $\CC$ be the corresponding full subcategory of $\Db(\AA)$. Let $F$ be an autoequivalence of $\Db(\AA)$ such that there is an isomorphism of functors
$$f \colon F|_{\CC} \xrightarrow{\simeq} \emph{id}_{\CC}.$$
Then there exists an isomorphism of functors $F \xrightarrow{\simeq} \emph{id}_{\Db(\AA)}$ extending $f$.
\end{prop}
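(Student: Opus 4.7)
The plan is to extend $f$ from $\CC$ to $\Db(\AA)$ in two stages: first to the heart $\AA$, and then to all of $\Db(\AA)$ via the standard $t$-structure.

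For the first stage, I would fix $A \in \AA$ and use condition (i) of the almost ample set to choose a surjection $\pi_A \colon C_i^{\oplus k} \twoheadrightarrow A$ with kernel $K_A \in \AA$. Applying the exact autoequivalence $F$ produces a distinguished triangle $F(K_A) \to F(C_i^{\oplus k}) \to F(A) \to F(K_A)[1]$, and I would define $f_A \colon F(A) \to A$ as the unique morphism completing the square built from $\pi_A$, $F(\pi_A)$, and the given isomorphism $f_{C_i^{\oplus k}}$. Producing the lift requires $\pi_A \circ f_{C_i^{\oplus k}}$ to vanish on the image of $F(K_A) \to F(C_i^{\oplus k})$, which forces a simultaneous inductive construction of $f_{K_A}$. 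The natural scheme is to iterate condition (i) to build a resolution of $A$ by $\CC$-objects and recurse on its length; the finite homological dimension of $\AA$ guarantees termination. Once $f_A$ is constructed, the five lemma applied to the morphism of distinguished triangles shows $f_A$ is an isomorphism.

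For the second stage, I would induct on the cohomological amplitude. Given $X \in \Db(\AA)$, the truncation triangle $\tau_{\leq n}X \to X \to \tau_{>n}X \xrightarrow{+}$ has outer terms of strictly smaller amplitude, so by induction we have isomorphisms $f_{\tau_{\leq n}X}$ and $f_{\tau_{>n}X}$. Axiom TR3 of triangulated categories then produces a morphism $f_X \colon F(X) \to X$ completing the diagram to a morphism of triangles, and the five lemma confirms $f_X$ is an isomorphism. Functoriality across this stage reduces to functoriality in the previous one.

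The hard part will be stage one: ensuring that $f_A$ is well-defined (independent of the choice of surjection $\pi_A$) and functorial in $A$. Both issues hinge on condition (ii), $\Hom_\AA(A, C_i) = 0$: given two candidate lifts arising from different surjections, I would combine them via the common surjection from the direct sum of their sources and show that any difference is a morphism $F(A) \to A$ which, after composing with the epimorphism from a $\CC$-object, becomes zero; the Hom-vanishing then forces the difference itself to be zero, since otherwise it would yield a nontrivial map detected by $\CC$. The same Hom-vanishing makes the TR3 completions in stage two unique, so the isomorphism of functors assembles without ambiguity, and compatibility with the given $f$ on $\CC$ is automatic by construction (taking $\pi_A = \mathrm{id}$ when $A \in \CC$).
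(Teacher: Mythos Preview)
The paper does not prove this proposition; it is cited from \cite{CS} (their Proposition~3.3) and invoked without argument as input to Lemma~\ref{lemma_enhancement}. There is therefore no in-paper proof to compare against.

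Your two-stage outline---extend $f$ to $\AA$ by resolving each object by direct sums from $\CC$, then climb to $\Db(\AA)$ by induction on cohomological amplitude through truncation triangles---is the strategy of \cite{CS} and of the earlier Orlov and Lunts--Orlov arguments it refines, and the roles you assign to finite homological dimension (terminating the recursion in stage one) and to t-exactness (forcing uniqueness of the TR3 fill-ins in stage two) are correct. The one point that needs sharpening is your use of condition~(ii) for well-definedness in stage one. You argue that the difference $\delta$ of two candidate lifts satisfies $\delta\circ F(\pi)=0$ and that ``the Hom-vanishing then forces $\delta=0$''; but $\delta\circ F(\pi)=0$ only factors $\delta$ through $F(K_A)[1]$, whereas condition~(ii) is the vanishing $\Hom_\AA(A,C_i)=0$ of maps \emph{into} $\CC$-objects, which is the wrong direction for killing a class in $\Hom(F(K_A)[1],A)$. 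In the actual argument condition~(ii) enters earlier and differently: it is what makes the $\CC$-resolution functorial in $A$ and what guarantees that $f_A$ is independent of the choice of lift of a morphism in $\AA$ to the level of resolutions. That bookkeeping is where the real work lies; your sketch identifies the right ingredient but applies it at the wrong step.
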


Recall that the Verdier quotient $\Ddg(\AA):=\Cdg(\AA) / \mathrm{Ac}_{\mathrm{dg}}^{\mathrm{b}}(\AA)$ is an enhancement of $\Db(\AA)$, and thus by Theorem \ref{thm_derivedcategoryofheart} of $\KK$. Here $\Cdg(\AA)$ denotes the dg category of bounded complexes in $\AA$, and $\mathrm{Ac}_{\mathrm{dg}}^{\mathrm{b}}(\AA) \subset \Cdg(\AA)$ its full dg subcategory of acyclic complexes. In fact, the homotopy category of $\Cdg(\AA)$ is $\rH^0(\Cdg(\AA))=\mathrm{K}^{\mathrm{b}}(\AA)$, where the latter is the homotopy category of complexes. This implies the natural identification $\rH^0(\Ddg(\AA))=\Db(\AA)$ (see for instance \cite[Section 1.2]{CS_tour}).  

\cite{CNS} implies that $\Db(\AA)$ has a unique enhancement, namely if $(\EE, \epsilon)$ is another enhancement of $\Db(\AA)$ then there exists a quasi-functor 
\begin{equation} \label{eq_quasifunctor}
F \colon \Ddg(\AA) \to \EE
\end{equation}
such that $\rH^0(F)$ is an equivalence (thus $F$ is an isomorphism in Hqe), constructed as follows. Let $\rVb(\AA) \subset \rKb(\AA)$ be the full subcategory whose objects have zero differential. Let 
$$Q \colon \rKb(\AA) \to \Db(\AA)$$ 
be the quotient functor and set
$$\rBb(\AA):=Q(\rVb(\AA))$$
which is a full subcategory of $\Db(\AA)$, having the same objects as $\rVb(\AA)$ (but with different morphisms). We use the notation $A^*$ for objects in $\rVb(\AA)$ and thus of $\rBb(\AA)$. We have that the full dg subcategory $\rVbdg(\AA)$ of $\Cdg(\AA)$ consisting of complexes with trivial differential defines an enhancement of $\rVb(\AA)$. In \cite[Section 4]{CNS} the authors construct a dg enhancement $\rBbdg(\AA)$ of $\rBb(\AA)$, whose definition depends on the pair $(\EE, \epsilon)$. Using this construction, they define a functor 
$$g \colon \Cdg(\AA) \to \EE$$
which factors through the quotient $\Ddg(\AA)$, so that $g$ is the composition
$$\Cdg(\AA) \to \Ddg(\AA) \xrightarrow{F} \EE.$$
They finally show that $\rH^0(F) \colon \Db(\AA) \to \rH^0(\EE)$ is an equivalence. 

Our goal is to show the following lemma which implies $\Db(\AA) \cong \KK$ has a strongly unique enhancement. 

\begin{lem} \label{lemma_enhancement}
There is an isomorphism of functors $\epsilon \circ \rH^0(F) \cong \emph{id}_{\Db(\AA)}$.
\end{lem}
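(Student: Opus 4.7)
The plan is to apply Proposition \ref{prop_extensionviaampleseq} to the autoequivalence $G:=\epsilon\circ\rH^0(F)$ of $\Db(\AA)$, using the almost ample set $\{C_{[A]}\}_{[A]\in I}$ of (\ref{eq_aaset}). There are three things to set up: a finite homological dimension bound on $\AA$, a natural isomorphism $G|_{\CC}\cong\mathrm{id}_{\CC}$ (where $\CC\subset\Db(\AA)$ is the full subcategory spanned by the $C_{[A]}$), and then the global extension furnished by the Proposition.

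First I would verify the homological dimension bound. For $A,B\in\AA$ and $n\geq 3$, by taking Harder--Narasimhan and Jordan--H\"older filtrations in $\AA$, any class in $\Ext^n_{\AA}(A,B)=\Hom_{\KK}(A,B[n])$ reduces to contributions of the form $\Hom_{\KK}(E,F[n])$ with $E,F$ $\sigma$-stable in $\AA$, and these vanish by Assumption \ref{star}(c). Hence $\AA$ has homological dimension at most $2$, which is the hypothesis required by Proposition \ref{prop_extensionviaampleseq}.

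The key step is to extract from the construction recalled above a natural isomorphism $G|_{\rBb(\AA)}\cong\mathrm{id}_{\rBb(\AA)}$. Each $C_{[A]}$ lies in $\AA$, and viewed as a complex concentrated in degree $0$ it has zero differential, so its image in $\Db(\AA)$ belongs to $\rBb(\AA)$ and in particular $\CC\subset\rBb(\AA)$. The functor $g\colon\Cdg(\AA)\to\EE$ of \cite[Section 4]{CNS} is built so that its restriction to $\rVbdg(\AA)$ realises the dg enhancement $\rBbdg(\AA)$ of $\rBb(\AA)$ compatibly with $\epsilon$; passing to $\rH^0$ and composing with $\epsilon$ should then produce the required natural isomorphism on $\rBb(\AA)$, and a fortiori on $\CC$.

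Once these are in place, Proposition \ref{prop_extensionviaampleseq} applied to the autoequivalence $G$ (which is an autoequivalence because $\rH^0(F)$ is an equivalence by the quoted result of \cite{CNS}) together with the restricted isomorphism $f\colon G|_{\CC}\xrightarrow{\sim}\mathrm{id}_{\CC}$ delivers the desired natural isomorphism $G=\epsilon\circ\rH^0(F)\xrightarrow{\sim}\mathrm{id}_{\Db(\AA)}$. The main obstacle I foresee is the middle step: one has to unwind the construction of $\rBbdg(\AA)$ and of $g$ in \cite[Section 4]{CNS} far enough to produce a \emph{functorial} identification on $\rBb(\AA)$, and not merely an objectwise one. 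The other two steps, by contrast, are immediate from Assumption \ref{star}(c) and from a direct appeal to the extension criterion of \cite{CS}.
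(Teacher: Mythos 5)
Your overall strategy matches the paper's: reduce via Proposition \ref{prop_extensionviaampleseq} to producing a natural isomorphism on the full subcategory $\CC$ spanned by the almost ample set of \eqref{eq_aaset}, observe $\CC\subset\rBb(\AA)$, and get the restricted isomorphism from the construction of $F$ in \cite{CNS}. Your verification of finite homological dimension via Assumption \ref{star}(c) and filtrations into stable factors is also correct, though in the paper this vanishing was already established in the proof of Theorem \ref{thm_derivedcategoryofheart}.

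However, there is a genuine gap precisely where you flag one. You write that the restriction of $g$ to $\rVbdg(\AA)$ realises $\rBbdg(\AA)$ compatibly with $\epsilon$, so ``passing to $\rH^0$ and composing with $\epsilon$ should then produce the required natural isomorphism on $\rBb(\AA)$,'' but you acknowledge you cannot see how to get functoriality rather than an objectwise identification. This is not a small worry: $\rVb(\AA)$ and $\rBb(\AA)$ have the same objects but different morphism spaces, since the morphisms of $\rBb(\AA)$ are computed after applying the quotient functor $Q\colon\rKb(\AA)\to\Db(\AA)$. A natural transformation defined only for morphisms in $\rVb(\AA)$ does not automatically extend to the larger morphism spaces in $\Db(\AA)$, and the subcategory $\CC$ is full in $\Db(\AA)$, not in $\rKb(\AA)$, so you must control naturality with respect to these extra morphisms. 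The paper resolves this with two separate inputs from \cite{CNS}: first, \cite[Lemma 5.1]{CNS} supplies a natural isomorphism $\theta\colon G|_{\rVb(\AA)}\xrightarrow{\sim} Q|_{\rVb(\AA)}$ where $G=\epsilon\circ\rH^0(g)=G'\circ Q$; second, and crucially, \cite[Proposition 5.2]{CNS} shows that for an arbitrary morphism $\alpha\in\Hom_{\Db(\AA)}(A_1^*,A_2^*)$ between objects of $\rBb(\AA)$, the square relating $G(\alpha)$, $\alpha$, $\theta_{A_1^*}$, $\theta_{A_2^*}$ commutes. Only the second statement upgrades the objectwise isomorphism to a natural transformation $\theta'\colon G'|_{\rBb(\AA)}\xrightarrow{\sim}\mathrm{id}_{\rBb(\AA)}$ and hence, by restriction, the required $f\colon G'|_{\CC}\xrightarrow{\sim}\mathrm{id}_{\CC}$. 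Without invoking something like \cite[Proposition 5.2]{CNS}, the proof does not close.
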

\begin{proof}
Set $G':=\epsilon \circ \rH^0(F)$. By Proposition \ref{prop_extensionviaampleseq} it is enough to show there is an isomorphism between the restriction functors $G'|_{\CC} \cong \text{id}_{\CC}$, where $\CC$ is the full subcategory of $\Db(\AA)$ defined by the almost ample set \eqref{eq_aaset}. This is a consequence of the construction of $F$ in \cite{CNS}. Indeed, set $G:=\epsilon \circ \rH^0(g)$. Since by definition $\rH^0(g)= \rH^0(F) \circ Q$, we have $G= G' \circ Q$. By \cite[Lemma 5.1]{CNS} there is an isomorphism of functors
$$\theta \colon G|_{\rVb(\AA)} \xrightarrow{\sim} Q|_{\rVb(\AA)}.$$
As a consequence, for every $A^* \in \rBb(\AA)$, we have that $\theta_{A^*}$ induces an isomorphism $G(A^*) \cong A^*$ between the objects in $\Db(\AA)$. Consider $\alpha \in \Hom_{\Db(\AA)}(A_1^*, A_2^*)$ for $A_i^* \in \rBb(\AA)$. 
By \cite[Proposition 5.2]{CNS} the diagram
$$
\xymatrix{
G(A_1^*) \ar[r]^{G(\alpha)} \ar[d]^{\theta_{A_1^*}} & G(A_2^*) \ar[d]^{\theta_{A_2^*}}\\
A_1^* \ar[r]^\alpha & A_2^*
}
$$
commutes in $\Db(\AA)$. Thus $\theta$ induces a natural transformation $\theta' \colon G'|_{\rBb(\AA)} \cong \text{id}_{\rBb(\AA)}$ which is an isomorphism of functors. Since $\CC$ is a full subcategory of $\rBb(\AA)$, it follows that $\theta'$ induces an isomorphism of functors $f \colon G'|_{\CC} \cong \text{id}_{\CC}$. This implies the statement.
\end{proof}
We are now ready to prove the second part of Theorem \ref{thm_derivedcategoryofheart_intro}.

\begin{thm}\label{thm_stroglyunique}
Let $\TT$ be the derived category of an abelian category. Let $\KK$ be an admissible subcategory of $\TT$ having a stability condition $\sigma=(\AA, Z)$, whose heart $\AA$ is induced from a heart on $\TT$ and satisfying Assumption \ref{star}. Then $\KK$ has a strongly unique enhancement.
\end{thm}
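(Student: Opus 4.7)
The plan is to reduce strong uniqueness of enhancements of $\KK$ to the analogous property for $\Db(\AA)$ using the equivalence $F \colon \Db(\AA) \xrightarrow{\sim} \KK$ from Theorem \ref{thm_derivedcategoryofheart}, and then to invoke Lemma \ref{lemma_enhancement} twice. Since $F$ is an equivalence, any enhancement $(\EE,\epsilon)$ of $\KK$ induces an enhancement $(\EE, F^{-1}\circ \epsilon)$ of $\Db(\AA)$, where $F^{-1}$ denotes a fixed quasi-inverse of $F$. In particular, the standard enhancement $\Ddg(\AA)$ of $\Db(\AA)$ provides the common target through which all enhancements of $\KK$ can be compared.

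First, given two enhancements $(\EE_1, \epsilon_1)$ and $(\EE_2, \epsilon_2)$ of $\KK$, I would apply Lemma \ref{lemma_enhancement} to each of the induced enhancements $(\EE_i, F^{-1}\circ \epsilon_i)$ of $\Db(\AA)$. This produces quasi-functors
\[
G_i \colon \Ddg(\AA) \longrightarrow \EE_i, \qquad i=1,2,
\]
such that $\rH^0(G_i)$ is an equivalence and the composition $(F^{-1}\circ \epsilon_i)\circ \rH^0(G_i)$ is isomorphic to $\mathrm{id}_{\Db(\AA)}$, or equivalently
\[
\epsilon_i \circ \rH^0(G_i) \;\cong\; F.
\]
Here the hypotheses on $\sigma$ ensure that the almost ample set constructed before Lemma \ref{lemma_enhancement} exists and that $\AA$ has finite homological dimension (indeed, dimension $\leq 2$ by Assumption \ref{star}(c)), so that Proposition \ref{prop_extensionviaampleseq} applies as in the proof of that lemma.

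Second, since each $G_i$ induces an equivalence on homotopy categories, it is an isomorphism in $\mathrm{Hqe}$; hence the composite
\[
G \;:=\; G_2 \circ G_1^{-1} \colon \EE_1 \longrightarrow \EE_2
\]
is a well-defined quasi-functor in $\mathrm{Hqe}$, and $\rH^0(G)$ is an equivalence isomorphic to $\rH^0(G_2)\circ \rH^0(G_1)^{-1}$. Combining the two isomorphisms above yields
\[
\epsilon_2 \circ \rH^0(G) \;\cong\; \epsilon_2\circ \rH^0(G_2)\circ \rH^0(G_1)^{-1} \;\cong\; F\circ \rH^0(G_1)^{-1} \;\cong\; \epsilon_1,
\]
which is exactly the condition defining a strongly unique enhancement in Definition \ref{def_uniqueenh}.

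The only potential obstacle is verifying that the composition and inversion of quasi-functors behave compatibly with $\rH^0$ up to natural isomorphism, but this is a standard property of the homotopy category of dg categories $\mathrm{Hqe}$ (see e.g.\ \cite{CS_tour}). All the genuinely new input needed beyond the existence of $F$ is encapsulated in Lemma \ref{lemma_enhancement}, so the present theorem follows by formal manipulation.
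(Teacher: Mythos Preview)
Your proof is correct and follows the same approach as the paper: both reduce everything to Lemma~\ref{lemma_enhancement}, which compares an arbitrary enhancement to the standard one $\Ddg(\AA)$ via the quasi-functor of \eqref{eq_quasifunctor}. The paper's proof simply applies Lemma~\ref{lemma_enhancement} once to a single arbitrary enhancement (implicitly identifying $\KK$ with $\Db(\AA)$ via Theorem~\ref{thm_derivedcategoryofheart}) and leaves the formal composition-and-inversion step you spell out to the reader.
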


\begin{proof}
Let $(\EE, \epsilon)$ be an enhancement of $\KK$. Consider the quasi-functor $F$ defined in \eqref{eq_quasifunctor}. By Lemma \ref{lemma_enhancement} there is an isomorphism of functors $\epsilon \circ \rH^0(F) \cong \text{id}_{\Db(\AA)}$, giving the statement.
\end{proof}
 
\subsection{Fourier--Mukai functors} \label{sec_FMfunctors}
Let $X_1$ and $X_2$ be smooth projective schemes over a field $\mathbb{K}$. Let $\KK_1 \subset \TT_1:=\Db(X_1)$ and $\KK_2 \subset \TT_2:=\Db(X_2)$ be admissible subcategories which are components of a semiorthogonal decomposition. For $j=1,2$, denote by $i_j^* \colon \TT_j \to \KK_j$ the left adjoint functor of the inclusion $i_j \colon \KK_j \hookrightarrow \TT_j$.

\begin{dfn} \label{def_FMtype}
A fully faithful functor $\Phi \colon \KK_1 \xrightarrow{\sim} \KK_2$ is of Fourier--Mukai type if the composition
$$\TT_1 \xrightarrow{i^*_1} \KK_1 \xrightarrow{\Phi} \KK_2 \xrightarrow{i_2} \TT_2$$
is a Fourier--Mukai functor as in \eqref{eq_FMfunctor}.
\end{dfn} 

Note that if $\Phi$ is a functor of Fourier--Mukai type as in Definition \ref{def_FMtype}, then $\Phi$ is a splitting functor in the sense of \cite[Definition 3.1]{Kuz_hpd}. We now remark the following property which is probably well-known to the experts.  
 
\begin{prop}\label{prop_FMtype}
Assume that $\KK_1$ has a strongly unique enhancement. Then every equivalence $\Phi \colon \KK_1 \xrightarrow{\sim} \KK_2$ is of Fourier--Mukai type.
\end{prop}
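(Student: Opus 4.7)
The plan is to reduce the Fourier--Mukai property to the existence of a dg lift and exploit the strongly unique enhancement of $\KK_1$ to produce such a lift. Concretely, by the characterization of Fourier--Mukai functors due to Toën and Lunts--Stellari (see \cite{LS, To} and \cite[Proposition 6.1]{CS_tour}), recalled in Section \ref{sec_prelim}, it suffices to exhibit a dg lift of the functor $\Psi := i_2 \circ \Phi \circ i_1^\ast \colon \Db(X_1) \to \Db(X_2)$ with respect to the standard dg enhancements $\Ddg(X_j)$ of $\TT_j$.

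The first step will be to set up the dg picture compatibly with the semiorthogonal decompositions. Denote by $(\Ddg(X_j), \epsilon_j^{\TT})$ the canonical enhancement of $\TT_j$, and let $\EE_j \subset \Ddg(X_j)$ be the full dg subcategory on the objects of $\KK_j$. Because $\KK_j$ is admissible in $\TT_j$, the dg category $\EE_j$ is pretriangulated and $(\EE_j, \epsilon_j)$ (with $\epsilon_j$ obtained from $\epsilon_j^{\TT}$) is an enhancement of $\KK_j$; moreover the tautological dg inclusion $\iota_j \colon \EE_j \hookrightarrow \Ddg(X_j)$ is a dg lift of $i_j$ and admits, in $\mathrm{Hqe}$, a dg left adjoint $\iota_j^\ast$ lifting $i_j^\ast$. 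Next, I will view the pair $(\EE_2, \Phi^{-1} \circ \epsilon_2)$ as a second enhancement of $\KK_1$. Invoking the strongly unique enhancement of $\KK_1$, I obtain a quasi-functor
$$F \colon \EE_1 \longrightarrow \EE_2$$
such that $\rH^0(F)$ is an equivalence and $\epsilon_2 \circ \rH^0(F) \cong \Phi \circ \epsilon_1$, i.e.\ $F$ is a dg lift of $\Phi$ with respect to the chosen enhancements.

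The final step will be to transport this lift to the ambient categories. Define the quasi-functor
$$\widetilde F := \iota_2 \circ F \circ \iota_1^\ast \colon \Ddg(X_1) \longrightarrow \Ddg(X_2).$$
Composing the natural isomorphism $\epsilon_2 \circ \rH^0(F) \cong \Phi \circ \epsilon_1$ with the isomorphisms $\epsilon_j^{\TT} \circ \rH^0(\iota_j) \cong i_j \circ \epsilon_j$ and their adjoints yields an isomorphism $\epsilon_2^{\TT} \circ \rH^0(\widetilde F) \cong \Psi \circ \epsilon_1^{\TT}$, so $\widetilde F$ is a dg lift of $\Psi$ in the sense of Definition \ref{def_dglift}. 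By Toën's theorem, $\Psi$ is of Fourier--Mukai type, which is precisely the meaning of $\Phi$ being of Fourier--Mukai type (Definition \ref{def_FMtype}).

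The main obstacle I foresee is not the core argument, which is fairly formal once the hypothesis is unpacked, but rather the preliminary bookkeeping in the first step: one must know that the canonical enhancement $\Ddg(X_j)$ is compatible with the semiorthogonal decomposition at the dg level, so that $\EE_j$ is pretriangulated and the inclusion admits a dg left adjoint. This is a standard feature of enhancements of admissible semiorthogonal decompositions (and can be found in the literature on dg enhancements of semiorthogonal decompositions), but it is the one point in the argument where the admissibility of $\KK_j$ in $\Db(X_j)$ is genuinely used, so it should be spelled out carefully before applying Toën's theorem.
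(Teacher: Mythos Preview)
Your proposal is correct and follows essentially the same route as the paper's proof: both pass to the induced dg enhancements of the $\KK_j$ inside $\Ddg(X_j)$, use the strongly unique enhancement of $\KK_1$ applied to the pair $(\EE_1,\epsilon_1)$ and $(\EE_2,\Phi^{-1}\circ\epsilon_2)$ to produce a dg lift $F$ of $\Phi$, and then compose with the dg lifts of $i_2$ and $i_1^\ast$ before invoking To\"en's theorem. The only difference is cosmetic notation, and your cautionary remark about the existence of the dg left adjoint $\iota_1^\ast$ matches the point the paper also spells out.
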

\begin{proof}
Let $(\EE_j, \epsilon_j)$ be the natural enhancement of $\TT_j$ for $j=1,2$. Denote by $(\FF_j, \delta_j)$ the enhancement of $\KK_j$ induced from $(\EE_j, \epsilon_j)$. Note that $(\FF_2, \delta_2)$ is an enhancement of $\KK_1$, because of the equivalence $\rH^0(\FF_2) \xrightarrow{\delta_2} \KK_2 \xrightarrow{\Phi^{-1}} \KK_1$. Since $\KK_1$ has a strongly unique enhancement, there exists a quasi-functor $F \colon \FF_1 \xrightarrow{\sim} \FF_2$ such that $\rH^0(F)$ is an equivalence sitting in the following commutative diagram:
$$
\xymatrix{
\rH^0(\FF_1) \ar[r]^{\rH^0(F)}_{\sim} \ar[d]^{\delta_1}_{\sim}& \rH^0(\FF_2) \ar[d]^{\delta_2}_{\sim}\\
\KK_1 \ar[r]^{\Phi}_{\sim} & \KK_2
}.
$$
Note that the inclusion functor $i_2$ has a dg lift $i_2^{\text{dg}}$ making
$$
\xymatrix{
\rH^0(\FF_2) \ar[r]^{\rH^0(i_2^{\text{dg}})} \ar[d]^{\delta_2}_{\sim}& \rH^0(\EE_2) \ar[d]^{\epsilon_2}_{\sim}\\
\KK_2 \ar[r]^{i_2} & \TT_2 
}
$$
to commute. Indeed, $\FF_2$ is by definition the dg subcategory of $\EE_2$ whose objects belong to $\KK_2$ via the equivalence $\epsilon_2$ and is a full admissible subcategory of $\EE_2$. The functor $i_2^{\text{dg}}$ is the natural embedding of $\FF_2$ in $\EE_2$ and $\epsilon_2 \circ \rH^0(i_2^{\text{dg}})$ factors through $\KK_2$ defining $\delta_2$. Analogously, the functor $i_1^*$ has dg lift $i_1^{*\text{dg}}$, defined as the left adjoint of $i_1^{\text{dg}}$. By definition it sits in the commutative diagram
$$
\xymatrix{
\rH^0(\EE_1) \ar[r]^{\rH^0(i_1^{*\text{dg}})} \ar[d]^{\epsilon_1}_{\sim}& \rH^0(\FF_1) \ar[d]^{\delta_1}_{\sim}\\
\TT_1 \ar[r]^{i_1^*} & \KK_1
}.
$$

Putting everything together we have 
$$i_2 \circ \Phi \circ i_1^*=\epsilon_2 \circ \rH^0(i_2^{\text{dg}}) \circ \rH^0(F) \circ \rH^0(i_1^{*\text{dg}}) \circ  (\epsilon_1)^{-1}.$$
Thus $i_2^{\text{dg}} \circ F \circ i_1^{*\text{dg}}$ is a dg lift of $i_2 \circ \Phi \circ i_1^*$. By \cite{To}, \cite[Proposition 6.11]{CS_tour} we conclude that the latter is of FM type. 
\end{proof}

From the previous results we deduce the following characterization.
\begin{cor} \label{cor_FMequivalence}
Let $X_1$ and $X_2$ be smooth projective schemes over a field $\mathbb{K}$. Let $\KK_1$ be an admissible subcategory of $\TT_1:=\Db(X_1)=\Db(\AA_{\TT_1})$ having a stability condition $\sigma=(\AA, Z)$ whose heart $\AA$ is induced from the heart $\AA_{\TT_1}$ on $\TT_1$ and satisfying the Assumption \ref{star}. Let $\KK_2$ be an admissible subcategory of $\Db(X_2)$. Then every equivalence $\KK_1 \xrightarrow{\sim} \KK_2$ is of Fourier--Mukai type.
\end{cor}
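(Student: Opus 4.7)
The statement is essentially a direct combination of the two main results proved earlier in this section, so my plan is to exhibit it as a two-line deduction rather than a self-contained argument.

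First, I would observe that the hypotheses on $\KK_1$ are exactly those of Theorem \ref{thm_stroglyunique}: the ambient category $\TT_1 = \Db(X_1)$ is the bounded derived category of the abelian category $\AA_{\TT_1}$ of coherent sheaves on $X_1$, the subcategory $\KK_1$ is admissible in $\TT_1$, and it carries a stability condition $\sigma=(\AA,Z)$ whose heart is induced from $\AA_{\TT_1}$ and satisfies Assumption \ref{star}. Applying Theorem \ref{thm_stroglyunique} therefore yields at once that $\KK_1$ has a strongly unique enhancement.

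Next, I would invoke Proposition \ref{prop_FMtype}, whose single hypothesis is that the source category of the equivalence has a strongly unique enhancement; this has just been verified. For any exact equivalence $\Phi \colon \KK_1 \xrightarrow{\sim} \KK_2$ between admissible subcategories of $\Db(X_1)$ and $\Db(X_2)$, Proposition \ref{prop_FMtype} gives that $\Phi$ is of Fourier--Mukai type in the sense of Definition \ref{def_FMtype}, i.e.\ that the composition $i_2 \circ \Phi \circ i_1^*$ is a Fourier--Mukai functor $\Db(X_1) \to \Db(X_2)$. This concludes the deduction.

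Since both ingredients are already established in the paper, there is really no obstacle to overcome; the only thing to verify carefully is that the assumption that $\KK_2$ sits inside $\Db(X_2)$ is enough to apply Proposition \ref{prop_FMtype}, but this is exactly the setup of that proposition, so nothing further is required. Thus the proof will be short and formal, a packaging of Theorem \ref{thm_stroglyunique} and Proposition \ref{prop_FMtype} into the geometric criterion that will be applied in Section \ref{sec_applications} to the Kuznetsov components of cubic fourfolds, Gushel--Mukai varieties, and quartic double solids.
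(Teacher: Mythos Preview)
Your proposal is correct and matches the paper's own proof exactly: the paper simply states that the corollary is a consequence of Theorem \ref{thm_stroglyunique} and Proposition \ref{prop_FMtype}, which is precisely the two-step deduction you outline.
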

\begin{proof}
This is a consequence of Theorem \ref{thm_stroglyunique} and Proposition \ref{prop_FMtype}. 
\end{proof}
 
\section{Geometric applications} \label{sec_applications}
In this section we apply the general results proved in the previous section to interesting geometric situations, listed in Examples \ref{ex_cubic4}, \ref{ex_GMeven}, \ref{ex_GModd}, \ref{ex_qds}, providing the proof of Theorems \ref{thm_application_intro} and \ref{thm_FMtype}. The key point is to show that the Kuznetsov components of the varieties in these examples satisfy Assumption \ref{star}. 

To make a universal argument for most of the cases at once, we make the following assumption on the stability conditions which turns out to be easy to check.
Let $\sigma=(\AA,Z)$ be a stability condition on $\KK$ with heart $\AA$. 
\begin{customthm}{$(\star')$} \label{star2}
$\quad$ 
\begin{enumerate}

    \item[\rm{(a0)}] There exists $\lambda_1$ and $\lambda_2$ in $\mathrm{K}_{\mathrm{num}}(\mathcal A)$ such that the image of $Z$ is a rank $2$ lattice spanned by $Z(\lambda_1)$ and $Z(\lambda_2)$.
    
    \item[\rm{(b1)}] The Euler paring $\chi$ is symmetric on $\mathrm{K}_{\mathrm{num}}(\mathcal A)$ and is negative definite on span$_{\Z}\{\lambda_1,\lambda_2\}$.
    
    \item[\rm{(b2)}] There exists $c\in\Z$ such that for any primitive character $v$ with $\chi(v,v)<c$, we have $M^s_\sigma(\KK, v)\neq \emptyset$. 
    
    \item[\rm{(c)}] Same as that of Assumption ($\star$) (c).
\end{enumerate}
\end{customthm}

Recall that we write $Z=-\mathrm{deg}+i\mathrm{rk}$ and $\delta_0:=\inf\{\mathrm{rk}(E)|E\in \mathcal K, \mathrm{rk}(E)> 0\}$.
\begin{lem}\label{lem:sufficientstable}
Let $\sigma$ be a stability condition on $\KK$ satisfying the Assumption \ref{star2}. Then for every non-zero object $E$ in $\mathcal A$, real number $s$ and $\delta>\delta_0$, there exists a $\sigma$-stable object $F$ in $\mathcal A$ with $\mu(F)<s$ and $\mathrm{rk}(F)<\delta$ such that $\chi(F,E)\neq 0$.
\end{lem}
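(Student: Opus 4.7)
The plan is to produce a primitive class $v \in \mathrm{K}_{\mathrm{num}}(\KK)$ with tightly controlled rank, slope, and Euler-form data, and then invoke Assumption (b2) to promote $v$ to the desired stable object.

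First, since $E\in\AA$ is nonzero we have $Z([E])\neq 0$, hence $[E]\neq 0$ in $\mathrm{K}_{\mathrm{num}}(\KK)$. Because $\mathrm{K}_{\mathrm{num}}$ is the quotient of $K(\KK)$ by the radical of $\chi$, the Euler form is non-degenerate there, so there exists $u_0\in\mathrm{K}_{\mathrm{num}}(\KK)$ with $\chi(u_0,[E])\neq 0$; without loss of generality take $u_0$ primitive. I would then study the two-parameter family
\[ v(n_1,n_2) := u_0 + n_1\lambda_1 + n_2\lambda_2,\qquad (n_1,n_2)\in\Z^2. \]
By Assumption (a0), the map $(n_1,n_2)\mapsto Z(v(n_1,n_2))$ is an affine real-linear isomorphism $\R^2\to\C$, sending $\Z^2$ bijectively onto a shifted rank-$2$ lattice in $\C$.

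Next, the region
\[ R := \{(n_1,n_2)\in\R^2 : \Im Z(v)\in(0,\delta),\ \mu(v)<s\} \]
pulls back from the unbounded open subset of $\C$ defined by $\Im z\in(0,\delta)$ and $\Re z > -s\,\Im z$, so $R$ contains infinitely many integer points. By Assumption (b1) the quadratic form $q(n_1,n_2) = \chi(n_1\lambda_1+n_2\lambda_2,\,n_1\lambda_1+n_2\lambda_2)$ is negative definite, so $\chi(v,v)\to-\infty$ as $\|(n_1,n_2)\|\to\infty$, and in particular $\chi(v,v)<c$ holds outside a bounded subset of $\R^2$. The remaining constraints are thin: the equation $\chi(v,[E]) = \chi(u_0,[E])+n_1\chi(\lambda_1,[E])+n_2\chi(\lambda_2,[E]) = 0$ is either infeasible or cuts out a single affine line in $\R^2$; and the non-primitive points in $u_0 + \Z\lambda_1 + \Z\lambda_2$ form a countable union of proper affine sublattices (the preimages of $p\,\mathrm{K}_{\mathrm{num}}(\KK)$ for the primes $p$). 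I would pick $(n_1,n_2)\in R$ of sufficiently large norm avoiding both of these thin sets, obtaining a primitive $v$ with $\Im Z(v)\in(0,\delta)$, $\mu(v)<s$, $\chi(v,v)<c$, and $\chi(v,[E])\neq 0$.

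Finally, Assumption (b2) applied to $v$ produces a $\sigma$-stable object $F\in\KK$ with $[F]=v$. Since $\Im Z([F])>0$, the phase of $F$ lies in $(0,1)$ and hence $F\in\AA$. We read off $\mathrm{rk}(F)=\Im Z(v)<\delta$, $\mu(F)=\mu(v)<s$, and $\chi(F,E)=\chi([F],[E])\neq 0$, which is exactly what is required. I expect the main technical obstacle to be verifying that all four conditions on $v$ can be met simultaneously with primitivity. The core observation is that $R$ is unbounded and open, while the obstructions to be avoided lie either in affine subsets of dimension at most $1$ or in a sparse family of finite-index sublattices, so generic lattice points in the unbounded tail of $R$ satisfy every requirement.
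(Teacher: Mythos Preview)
Your strategy is essentially the paper's, but there is a genuine gap in the case $\delta_0>0$ with $\delta<2\delta_0$. In that regime every lattice point in your region $R$ has $\Im Z(v)=\delta_0$, so $R\cap\Z^2$ lies on a single affine line $L$ in the $(n_1,n_2)$-plane; the two-dimensionality of $R$ is illusory at the level of integer points. Your ``avoid a thin set'' step then fails if the affine line $\{\chi(v,[E])=0\}$ coincides with $L$: every candidate $v$ satisfies $\chi(v,[E])=0$ and nothing remains. Your hypothesis $\chi(u_0,[E])\neq0$ does not preclude this, since $u_0$ need not lie on $L$. Concretely, whenever $(\chi(\lambda_1,E),\chi(\lambda_2,E))$ is proportional to $(\mathrm{rk}\,\lambda_1,\mathrm{rk}\,\lambda_2)$, the function $\chi(\,\cdot\,,E)$ is constant along $L$, and that constant can vanish regardless of the value $\chi(u_0,E)$.

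The paper handles exactly this degeneracy by exploiting the decomposition $\mathrm{K}_{\mathrm{num}}(\KK)=\mathrm{span}_{\Z}\{\lambda_1,\lambda_2\}\oplus\mathrm{Ker}(Z)$: it fixes $w$ with $\mathrm{rk}(w)=\delta_0$ and $v$ with $Z(v)\in\R_{>0}$, and then, \emph{before} sliding along the line, adds a correction $\kappa_E\in\mathrm{Ker}(Z)$ chosen so that $\chi(w+\kappa_E,E)\neq0$ (non-degeneracy of $\chi$ guarantees such $\kappa_E$ exists whenever the naive choice fails). This shifts the constant value of $\chi(\,\cdot\,,E)$ on $L$ without moving $L$ in the $Z$-plane. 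Your argument can be repaired the same way: rather than fixing $u_0$ once and for all, allow yourself to adjust it by an element of $\mathrm{Ker}(Z)$ so that $\chi(\,\cdot\,,E)$ is not identically zero on the rank-$\delta_0$ line. The primitivity claim on the resulting one-parameter family $w_0+m v_0$ also needs a short argument (one must check no prime divides both $w_0$ and $v_0$ in $\mathrm{K}_{\mathrm{num}}$), though this is routine once the family is set up correctly.
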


\begin{proof}
In the case of $\delta_0>0$, by Assumption (a0), there exist non-zero $w$ and $v$ in span$_\Z\{\lambda_1,\lambda_2\}$ such that 
$$\mathrm{rk}(w)=\delta_0\text{ and }Z(v)\in\R_{>0}.$$


Note that $\mathrm{K}_{\mathrm{num}}(\AA)$ can be spanned by $w$, $v$  and elements in $\mathrm{Ker}(Z)$. Since $\chi$ is non-degenerate on $\mathrm{K}_{\mathrm{num}}(\AA)$ by definition, if $\chi(v,E)=0$, then there exists $\kappa_E\in\mathrm{Ker}(Z)$ satisfying $\chi(w+\kappa_E,E)\neq 0$. 

Let $n$ be sufficiently negative such that 
\begin{enumerate}
    \item $\chi(nv+w+\kappa_E,nv+w+\kappa_E) <c$ and $nv+w+\kappa_E$ is primitive; 
    \item $\chi(nv+w+\kappa_E,E)\neq 0$;
    \item $\mu(nv+w+\kappa_E)<s$.
\end{enumerate}

The first condition can be satisfied since the left hand side of the inequality is
$$n^2\chi(v,v)+2n\chi(v,w+\kappa_E)+\chi(w+\kappa_E,w+\kappa_E)$$ and tends to $-\infty$ when $n$ tends to $-\infty$ by Assumption (b1). 

The second condition can be satisfied since either $\chi(v,E)\neq 0$ or $\chi(w+\kappa_E,E)\neq 0$ by the choice of $\kappa_E$.

The third condition can be satisfied since $\lim_{n\rightarrow -\infty}\mu(nv+w+\kappa_E)/n = \mathrm{deg}(v)/\delta_0$.

By Assumption (b2) and the first condition, there exists a $\sigma$-stable object $F\in \mathcal A$  with numerical class $nv+w+\kappa_E$. By the choice of $n$, we have $\mu(F)<s$, $\mathrm{rk}(F)=\delta_0<\delta$,  and $\chi(F,E)\neq 0$.\\

In the case of $\delta_0=0$, there exists a sequence of primitive character $w_n=a_n\lambda_1+b_n\lambda_2$ such that $$\mathrm{rk}(w_n)>0,\; \lim_{n\rightarrow +\infty}\mathrm{rk}(w_n)=0, \;\lim_{n\rightarrow +\infty}\mathrm{deg} (w_n)=-\infty, \; \lim_{n\rightarrow +\infty} \frac{b_n}{a_n}=q\not\in\mathbb Q\text{ and }\lim_{n\rightarrow +\infty} |a_n|=+\infty.$$

Note that $\mathrm{K}_{\mathrm{num}}(\AA)$ can be spanned by  $\lambda_1$, $\lambda_2$ and elements in $\mathrm{Ker}(Z)$. Since $\chi$ is non-degenerate on $\mathrm{K}_{\mathrm{num}}(\AA)$ by definition, if $\chi(\lambda_1,E)=\chi(\lambda_2,E)=0$, then there exists $\kappa_E\in\mathrm{Ker} (Z)$ satisfying $\chi(\kappa_E,E)\neq 0$. If $\chi(\lambda_1,E)$ or $\chi(\lambda_2,E)\neq 0$, then we set $\kappa_E=0$.

Let $n$ be sufficiently large such that 
\begin{enumerate}
    \item $\chi(w_n+\kappa_E,w_n+\kappa_E) <c$; 
    \item $\chi(w_n+\kappa_E,E)\neq 0$;
    \item $\mu(w_n+\kappa_E)<s$  and $\mathrm{rk}(w_n+\kappa_E)<\delta$.
\end{enumerate}

Note that the left hand side of the inequality in the first condition  is $\chi(w_n,w_n)+2\chi(w_n,\kappa_E)+\chi(\kappa_E,\kappa_E)$. Divided by $|a_n|^2$, it tends to $\chi(\lambda_1+q\lambda_2,\lambda_1+q\lambda_2)$ when $n$ tends to $\infty$. This value is negative by Assumption (b1). Therefore the first condition can be satisfied. 

Since $q$ is not a rational number, if  $\chi(\lambda_1,E)$ or $\chi(\lambda_2,E)\neq 0$, then $\chi(w_n,E)$ is not constantly zero. If both $\chi(\lambda_1,E)$ and $\chi(\lambda_2,E)=0$, then  $\chi(\kappa_E,E)\neq 0$ by the choice of $\kappa_E$. Therefore the second condition can be satisfied.

The third condition can be satisfied by the choice of $w_n$.

By Assumption (b2) and the first condition, there exists a $\sigma$-stable object $F\in \mathcal A$  with numerical class $w_n+\kappa_E$. By the choice of $n$ and Assumption (b1), we have $\mu(F)<s$, $\mathrm{rk}(F)<\delta$,  and $\chi(E,F)=\chi(F,E)\neq 0$. Hence, Assumption $(\star)$ (b) holds in all cases of $\delta_0$.
\end{proof}

\begin{prop}\label{lem:assumptioneqvui}
Let $\sigma$ be a stability condition satisfying the Assumption $(\star')$, then for every $\tilde g \in\widetilde{\mathrm{GL}}^+(2,\R)$, the stability condition $\sigma\cdot \tilde g$  satisfies the Assumption $(\star)$.
\end{prop}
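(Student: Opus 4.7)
My approach is twofold: first I would establish that Assumption \ref{star2} is preserved under the $\widetilde{\mathrm{GL}}^+(2,\R)$-action, and then I would derive conditions (a), (b), (c) of Assumption \ref{star} from Assumption \ref{star2}. The proposition then follows by applying the second step to $\sigma\cdot\tilde g$, and the neighborhood clause in Assumption \ref{star} for the case $\delta_0=0$ will be handled uniformly, since (a--c) will hold for every element of $\widetilde{\mathrm{GL}}^+(2,\R)$.

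For the equivariance step, I would check that each condition of Assumption \ref{star2} is invariant under $\sigma\mapsto\sigma\cdot\tilde g$, with $\tilde g=(g,M)$. Condition (a0) is preserved because the new central charge $M^{-1}\circ Z$ has image the rank-$2$ lattice $M^{-1}(Z(\mathrm{K}_{\mathrm{num}}(\KK)))$, generated by $M^{-1}Z(\lambda_1)$ and $M^{-1}Z(\lambda_2)$. Condition (b1) is unaffected since $\chi$ is intrinsic to $\KK$ and the classes $\lambda_i$ may be kept. Condition (b2) holds because the action leaves the set of $\sigma$-stable objects invariant, so $M^s_{\sigma\cdot\tilde g}(\KK,v)=M^s_\sigma(\KK,v)$ for every primitive $v$. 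Finally, condition (c), a Hom-vanishing statement between stable objects ordered by slope, is preserved: writing any $(\sigma\cdot\tilde g)$-stable object in $\AA_{\sigma\cdot\tilde g}=\PP_\sigma((g(0),g(1)])$ as $E_0[k]$ with $E_0$ being $\sigma$-stable in $\AA$, a short case analysis (using that $g(1)-g(0)=1$ forces $|k_E-k_F|\leq 1$) shows that the vanishings in (c) for $\sigma$ directly yield the vanishings in (c) for $\sigma\cdot\tilde g$.

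For the implication Assumption \ref{star2} $\Rightarrow$ Assumption \ref{star}, condition (a) follows from (a0) because a rank-$2$ lattice in $\C$ is discrete, and condition (c) is literally (c) of Assumption \ref{star2}. For condition (b), given a non-zero $E\in\AA$ and $s_0\in\R$, I would shrink $s_0$ so that $s_0<\mu^-(E)$ and apply Lemma \ref{lem:sufficientstable} (with some $\delta>\delta_0$) to obtain a $\sigma$-stable $F\in\AA$ with $\mu(F)<s_0$ and $\chi(F,E)\neq 0$. The slope bound $\mu(F)<\mu(E')$ for every stable JH-factor $E'$ of $E$, combined with Assumption \ref{star2}(c) applied with its two arguments swapped, yields $\Hom_\KK(F,E'[m])=0$ for $m\geq 2$, hence $\Hom_\KK(F,E[m])=0$ for $m\geq 2$. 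Together with the trivial vanishing $\Hom_\KK(F,E[m])=0$ for $m<0$ (both objects lying in $\AA$), this reduces the Euler characteristic to $\chi(F,E)=\dim\Hom_\KK(F,E)-\dim\Hom_\KK(F,E[1])$. Choosing $F$ within the family constructed in the proof of Lemma \ref{lem:sufficientstable} so that $\chi(F,E)>0$ then forces $\Hom_\KK(F,E)\neq 0$, proving (b).

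The hard part will be this sign control of $\chi(F,E)$: Lemma \ref{lem:sufficientstable} only yields $\chi(F,E)\neq 0$, and for $|n|$ large the sign is dominated by $\chi(v,E)$ along the chosen direction $v\in\mathrm{span}_\Z\{\lambda_1,\lambda_2\}$. When this sign is unfavorable I would add elements $\kappa\in\mathrm{Ker}(Z)$ to $[F]$; this preserves $\mu(F)$ and all the vanishings above, while shifting $\chi(F,E)$ by $\chi(\kappa,E)$, so the sign can be flipped whenever $\chi(\cdot,E)$ is not identically zero on $\mathrm{Ker}(Z)$. In the exceptional case that $[E]$ lies in the $\chi$-orthogonal of $\mathrm{Ker}(Z)$, one can instead use Serre duality together with (c) applied to the extension $G$ classified by a non-zero element of $\Ext^1_\KK(F,E)$ to produce the desired non-zero morphism. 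The neighborhood clause of Assumption \ref{star} for $\delta_0=0$ is then automatic from the equivariance: since (a--c) hold for every stability condition satisfying Assumption \ref{star2}, one may take $U=\widetilde{\mathrm{GL}}^+(2,\R)$.
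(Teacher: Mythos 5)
Your overall architecture—show Assumption \ref{star2} is $\widetilde{\mathrm{GL}}^+(2,\R)$-equivariant, then derive $(\star)$(a)--(c) from $(\star')$—matches the paper, as does the use of Lemma \ref{lem:sufficientstable} and the observation that (a) and (c) are essentially immediate. The gap is entirely in your treatment of $(\star)$(b), and it is substantive.

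You correctly isolate the crux: Lemma \ref{lem:sufficientstable} only gives $\chi(F,E)\neq 0$, and you need a nonzero morphism $F\to E$, not just nonzero Euler pairing. Your first remedy, perturbing $[F]$ by $\kappa\in\mathrm{Ker}(Z)$ to force $\chi(F,E)>0$, is not sound as stated: such a perturbation shifts the self-pairing by $2n\chi([F],\kappa)+n^2\chi(\kappa,\kappa)$, and since $(\star')$(b1) only controls $\chi$ on $\mathrm{span}_{\Z}\{\lambda_1,\lambda_2\}$ (not on $\mathrm{Ker}(Z)$), you have no guarantee the hypothesis $\chi(v,v)<c$ of $(\star')$(b2) survives, nor that the resulting class is primitive. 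More importantly, your fallback for the case $\chi(F,E)<0$—take the extension $G$ classified by a nonzero element of $\Ext^1$ (note: the relevant group is $\Ext^1_\KK(E,F)$, giving $0\to F\to G\to E\to 0$ and a surjection $G\twoheadrightarrow E$)—is silent on what to do when $G$ is not $\sigma$-stable, and this is exactly where the real work lies. The paper's proof does not try to control the sign of $\chi$ at all: it accepts $\chi(F_0,E)<0$, forms the extension $G$, and when $G$ is unstable, extracts a maximal-slope Jordan--H\"older subfactor $G^+$, maps it into $E$, passes to a maximal-slope JH subfactor $E_0^+$ of the image, and shows (using a careful choice of $\delta$ and the rank bound on $G^+$) that $E_0^+$ is a \emph{proper} subobject of $E$ with $Z(E_0^+)\prec_{s_0} Z(E)$ for a bespoke discrete partial order on central-charge values. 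The argument then closes by transfinite/Noetherian induction on $Z(E)$ with respect to $\prec_{s_0}$, using the discreteness from (a). This induction, the order $\prec_{s_0}$, and the JH bookkeeping for the unstable extension are the essential content of the proof, and they are absent from your proposal. Without them, the argument for $(\star)$(b) does not go through.

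Two smaller points: the paper first reduces to $E$ being $\sigma$-stable (via HN and JH filtrations) rather than shrinking $s_0$ below $\mu^-(E)$ for general $E$; both are fine, but the reduction to stable $E$ is what makes the inductive step clean. And your remark that one may take $U=\widetilde{\mathrm{GL}}^+(2,\R)$ in the $\delta_0=0$ clause is correct and matches the paper's conclusion.
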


\begin{proof}
By definition, Assumption $(\star)$ (a) and (c) hold automatically. Note that the conditions in Assumption \ref{star2} are preserved by the $\widetilde{\mathrm{GL}}^+_2(\R)$-action. We only need to check that $\sigma$ satisfies Assumption $(\star)$ (b). 

By taking the Harder--Narasimhan filtration of $E$ with respect to $\sigma$, we only need prove that Assumption $(\star)$ (b) holds for every $\sigma$-semistable object $E$ in $\AA$. By taking a Jordan--H\"older factor that is also a subobject of $E$, we only need prove that Assumption $(\star)$ (b) holds for every $\sigma$-stable object $E$ in $\AA$.  Namely, we are going to prove the statement that for every $\sigma$-stable object $E$ in $\AA$ and every real number $s_0$, there exists a $\sigma$-stable object $F$ in $\AA$ satisfying $\mu(F)<s_0$ and $\Hom_{\KK}(F,E)\neq 0$.

Fix a real number $s_0$, we define the order $\prec_{s_0}$ for complex numbers in $\R_{>0}\cdot e^{i(0,\pi]}$  as follows:
\begin{itemize}
    \item If $c_1\in \R_{>0}\cdot e^{i(0,\cot^{-1}(-s_0))}$ and  $c_2\in \R_{>0}\cdot e^{i[\cot^{-1}(-s_0),\pi]}$, then $c_1\prec_{s_0}c_2$.
    \item If both $c_j=a_j+ib_j\in \R_{>0}\cdot e^{i[\cot^{-1}(-s_0),\pi]}$ for $j=1,2$, then 
$$c_1\prec_{s_0} c_2 \iff b_1\leq b_2 \text{ and } -\frac{a_1}{b_1}< -\frac{a_2}{b_2}.$$
\end{itemize}

Let us go back to the proof of the statement. Note that if $\mu(E)< s_0$, namely $Z(E)\in \R_{>0}\cdot e^{i(0,\cot^{-1}(-s_0))}$,  then we may just let $F=E$ and there is nothing to prove. 

By Assumption (a), for any complex number $c\in \R_{>0}\cdot e^{i[\cot^{-1}(-s_0),\pi]}$, there are only finitely many $Z(v)$'s of numerical characters $v\in\mathrm{K}_{\mathrm{num}}(\mathcal A)$ satisfying $Z(v)\not\in \R_{>0}\cdot e^{i(0,\cot^{-1}(-s_0))}$ and $Z(v)\prec_{s_0} c$. We may make induction on $Z(E)$ with respect to the order $\prec_{s_0}$.

Assume that for every $\sigma$-stable object $E'\in\mathcal A$ satisfying $Z(E')\prec_{s_0} Z(E)$, the statement holds. In other words, there exists a $\sigma$-stable object $F'$ satisfying $\mu(F')<s_0$ and $\Hom_{\mathcal K}(F',E')\neq 0$. 

In the case that $\delta_0>0$, we set $\delta=\frac{3}2\delta_0$ and $s< \min\{s_0,\frac{1}{\delta_0}\left((\mathrm{rk}(E)+\delta_0)s_0-\mathrm{deg}(E)\right)\}$. In the case that $\delta_0=0$, we set $s=s_0$ and sufficiently small $\delta>0$ such that the image of $Z$ does not intersect the area
$$\{a+ib\;|\;\mathrm{rk}(E)\leq b\leq \mathrm{rk}(E)+\delta,s\leq -\frac{a}b<\mu(E)\}.$$ 

Apply Lemma \ref{lem:sufficientstable} for $E$, $s$ and $\delta$, we get a $\sigma$-stable object  $F_0$ in $\AA$ such that $\mu(F_0)<s$, $\mathrm{rk}(F_0)<\delta$ and $\chi(F_0,E)\neq 0$.

If $\chi(F_0,E)>0$, then by Assumption (c), $\Hom_\KK(F_0,E[m])= 0$ for all $m\neq 0,1$. Hence, $\Hom_\KK(F_0,E)\neq 0$ and the statement holds.

Otherwise, by Assumption $(\star')$ (b1),  we have $\chi(E,F_0)=\chi(F_0,E)<0$ and by Assumption (c), $\Hom_\KK(E,F_0[m])= 0$ for all $m\neq 0,1,2$. Hence, we have $\Hom_\KK(E,F_0[1])\neq 0$.  Let $e$ be a non-zero morphism in $\Hom_\KK(E[-1],F_0)$ and $G$ be Cone$(E[-1]\xrightarrow{e}F_0)$.  In particular, $G$ is in $\mathcal A$.

If $G$ is $\sigma$-stable, then we may let $F=G$. Indeed, we always have $\Hom_\KK(G,E)\neq 0$. In the case of $\delta_0>0$, by the choice of $s$ and $\delta$, we have $\mu(G)=\frac{\deg(E)+\deg(F_0)}{\mathrm{rk}(E)+\delta_0}<\frac{\deg(E)+s\delta_0}{\mathrm{rk}(E)+\delta_0}<s_0$. In the case of $\delta_0=0$, by the choice of $s$ and $\delta$,  we must have $\mu(G)<s_0$. The statement holds.

If $G$ is not $\sigma$-stable, then we consider one of the Jordan--H\"older factors $G^+$ of $G$ with maximal slope that is also a subobject of $G$. Note that $0\neq \Hom_\KK(G^+,G)\hookrightarrow \Hom_\KK(G^+,E)$. We may let $f$ be a non-zero morphism in $\Hom_\KK(G^+,E)$. The image of $f$ is a non-zero subobject $E_0$ of $E$ in $\mathcal A$. Let $E^+_0$ be one of the Jordan--H\"older factors of $E_0$ with maximal slope that is a subobject of $E_0$. Then $E^+_0$ is also a subobject of $E$ in $\mathcal A$ as well.

 In the case of $\delta_0>0$, we have $\mathrm{rk}(G^+)\leq \mathrm{rk}(G)-\delta_0=\mathrm{rk}(E)$. Since $e\neq 0$, we have  $\mu(G^+)<\mu(E)$. Hence, the object $E^+_0$ is a proper subject of $E$. In the case of $\delta_0=0$, it follows by the choice of $s$ and $\delta$ that $\mathrm{rk}(G^+)<\mathrm{rk}(E)$. Hence, the object $E^+_0$ is a proper subject of $E$ in this case as well.
 
 If $\mu(E^+_0)<s_0$, then we may let $F=E^+_0$ and the statement holds. Otherwise, we have $Z(E^+_0)\prec_{s_0} Z(E)$. The statement holds by induction.
\end{proof}

\begin{lem} \label{lemma_Assforcubic}
Let $X$ be a cubic fourfold. Then the stability conditions $\sigma=(\AA, Z)$ on $\Ku(X)$ constructed in \cite{BLMS} satisfy Assumption \ref{star2}.
\end{lem}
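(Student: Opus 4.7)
The plan is to verify each of the four conditions (a0), (b1), (b2), (c) of Assumption \ref{star2} in turn, relying on the explicit construction of the BLMS stability conditions together with the known computations on the Mukai lattice of $\Ku(X)$ and the moduli-theoretic results of \cite{BLMNPS}.

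First, I would recall that for a cubic fourfold $X$ the Kuznetsov component $\Ku(X)$ is a noncommutative K3 surface with Serre functor $S \cong [2]$ and Mukai lattice $\widetilde{\rH}(\Ku(X),\Z)$ containing the distinguished rank-two sublattice $\text{span}_{\Z}\langle\lambda_1,\lambda_2\rangle$ of Addington--Thomas, generated by the projections to $\Ku(X)$ of certain twists of ideal sheaves of lines, on which the Euler pairing is given by the matrix $\bigl(\begin{smallmatrix}-2 & 1\\ 1 & -2\end{smallmatrix}\bigr)$. The BLMS construction produces stability conditions $\sigma = (\AA, Z)$ on $\Ku(X)$ by tilting the weak stability condition coming from the parameters $(\alpha,\beta)$, and the central charge factors through $\text{span}_{\Z}\langle\lambda_1,\lambda_2\rangle$. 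This immediately yields condition (a0). Condition (b1) is then split into two parts: symmetry of $\chi$ on $\mathrm{K}_{\mathrm{num}}(\Ku(X))$ follows from $S \cong [2]$ via Serre duality, while negative definiteness on $\text{span}_{\Z}\langle\lambda_1,\lambda_2\rangle$ is read off directly from the Gram matrix above.

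For condition (c), I would use Serre duality once more: for $\sigma$-stable $E,F \in \AA$ and $m \geq 3$ we have
\[
\Hom_{\Ku(X)}(E, F[m]) \cong \Hom_{\Ku(X)}(F, E[2-m])^{\vee} = 0,
\]
since $2-m \leq -1$ and both objects lie in the heart $\AA$. For the second half, if $\mu(E) < \mu(F)$ then $\Hom_{\Ku(X)}(E, F[2]) \cong \Hom_{\Ku(X)}(F, E)^{\vee}$, and any nonzero map $F \to E$ between stable objects in $\AA$ would force $\mu(F) \leq \mu(E)$, a contradiction.

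The main obstacle, and the step that I expect to require the most care, is condition (b2): the nonemptiness of $M_{\sigma}^{s}(\Ku(X), v)$ for every primitive Mukai vector $v$ with $\chi(v,v)$ sufficiently negative. Here I would invoke the projectivity and nonemptiness results of \cite{BLMNPS}: for any primitive $v \in \widetilde{\rH}(\Ku(X),\Z)$ with $v^2 \geq -2$ (equivalently $\chi(v,v) \leq 2$) and $\sigma$ generic with respect to $v$, the moduli space $M_{\sigma}(\Ku(X), v)$ is a nonempty, smooth, projective hyperkähler manifold and the stable locus $M_{\sigma}^{s}(\Ku(X), v)$ is nonempty (generically coinciding with it). Taking any $c \in \Z$ with $c \leq -2$ then gives (b2) for generic $\sigma$, and genericity can be arranged by a small perturbation inside the chamber to which the BLMS stability condition belongs, since Assumption \ref{star2} is $\widetilde{\mathrm{GL}}_2^+(\R)$-invariant and open in the space of stability conditions. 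The only subtlety is ensuring that walls can be crossed while preserving the numerical data needed, which follows from the wall-and-chamber description in \cite{BLMNPS}.
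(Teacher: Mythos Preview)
Your verification of (a0), (b1) and (c) is correct and matches the paper's proof essentially verbatim. For (b2) you also cite the right source, \cite{BLMNPS}, exactly as the paper does.

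The final paragraph, however, is both unnecessary and flawed. First, acting by $\widetilde{\mathrm{GL}}^+_2(\R)$ does not change the set of (semi)stable objects, so such a perturbation cannot make $\sigma$ generic with respect to any $v$. Second, condition (b2) asks for a single $c$ working for \emph{all} primitive $v$ with $\chi(v,v)<c$; since there are infinitely many such $v$ and walls depend on $v$, no perturbation of $\sigma$ can be $v$-generic for all of them simultaneously, so the ``perturb to generic'' strategy is doomed from the start. Third, the claim that Assumption~\ref{star2} is ``open in the space of stability conditions'' is neither proved nor needed.

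The correct way to finish is simply to note that the results of \cite{BLMNPS} give $M^s_\sigma(\Ku(X),v)\neq\emptyset$ for \emph{every} $\sigma$ in the distinguished component and every primitive $v$ with $v^2\ge -2$ (equivalently $\chi(v,v)\le 2$), not only for $v$-generic $\sigma$; this is what the paper invokes. If you want to be extra safe about the spherical case $v^2=-2$, take $c\le 0$: then $v^2\ge 2$, the moduli space $M_\sigma(\Ku(X),v)$ is nonempty of dimension $v^2+2\ge 4$, and for primitive $v$ the strictly semistable locus is a proper closed subset, so $M^s_\sigma(\Ku(X),v)\neq\emptyset$ without any genericity assumption. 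Either way, drop the perturbation discussion entirely.
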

\begin{proof} Recall that the numerical Grothendieck group $\mathrm{K}_{\mathrm{num}}(\Ku(X))$  contains two classes $\lambda_1$ and $\lambda_2$ spanning an $A_2$-lattice
$$
\begin{pmatrix}
-2 & 1 \\
1 & -2
\end{pmatrix} 
$$
with respect to  the Euler pairing $\chi$ by \cite{AT}.  The image of the central charge is spanned by the image of $\lambda_1$ and $\lambda_2$. As the Serre duality on $\Ku(X)$ is $[2]$, the Euler pairing on $\mathrm{K}_{\mathrm{num}}(\Ku(X))$ is symmetric. Hence Assumption (a0) and (b1) hold.

Assumption (b2) follows by \cite[Theorem 1.6]{BLMNPS}.

Finally Assumption (c) follows by Serre duality: for any $\sigma$-stable objects $E$ and $F$ in $\mathcal A$, we have $$\Hom_{\KK}(E,F[m])\cong \Hom_{\KK}(F,E[2-m])=0$$ for $m\geq 3$, and if $\mu(E)<\mu(F)$, for $m \geq 2$.
\end{proof}

\begin{lem}
Let $X$ be a GM fourfold. Then the stability conditions $\sigma$ on $\Ku(X)$ constructed in \cite{PPZ} satisfy Assumption \ref{star2}.
\end{lem}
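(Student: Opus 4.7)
The plan is to follow the exact same template as Lemma \ref{lemma_Assforcubic} for cubic fourfolds, substituting the inputs with their counterparts for GM fourfolds. All four conditions (a0), (b1), (b2), (c) of Assumption \ref{star2} have direct analogues in the GM setting, and the only nontrivial work is to cite the correct existence results from \cite{PPZ}.

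First, I would use the description of the Mukai lattice of $\Ku(X)$ together with the construction of the central charge in \cite{PPZ} to produce two classes $\lambda_1, \lambda_2 \in \mathrm{K}_{\mathrm{num}}(\Ku(X))$ spanning the image of $Z$, such that $\langle \lambda_1, \lambda_2 \rangle_\Z$ is an $A_2$-lattice with respect to the Euler pairing (in parallel to \cite{AT} for the cubic case). This verifies (a0). Since $X$ is an even-dimensional GM variety, the Serre functor of $\Ku(X)$ is isomorphic to $[2]$ by Example \ref{ex_GMeven}; hence the Euler pairing is symmetric on $\mathrm{K}_{\mathrm{num}}(\Ku(X))$ and its restriction to $\langle \lambda_1, \lambda_2 \rangle_\Z$ is negative definite from the $A_2$ shape with diagonal entries $-2$. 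This gives (b1).

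Condition (c) follows word-for-word as in the proof of Lemma \ref{lemma_Assforcubic}: for $\sigma$-stable $E, F \in \AA$, Serre duality yields
\[
\Hom_\KK(E, F[m]) \cong \Hom_\KK(F, E[2-m])^{\vee},
\]
which vanishes for $m \geq 3$ and, under the additional hypothesis $\mu(E) < \mu(F)$, also for $m=2$ by stability.

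The main step, and the only one requiring genuine geometric input, is (b2): nonemptiness of $M^s_\sigma(\Ku(X), v)$ for every primitive $v$ with $\chi(v,v) < c$ for a fixed constant $c$. This is precisely the content of the moduli-space results established in \cite{PPZ}, which play the role that \cite[Theorem 1.6]{BLMNPS} played for the cubic fourfold. I expect this to be the main (and only) nontrivial point, since the verifications of (a0), (b1), and (c) are formal consequences of the lattice structure and of the K3 type Serre functor, whereas (b2) genuinely depends on the construction and nonemptiness theorems of \cite{PPZ}.
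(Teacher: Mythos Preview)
Your approach is essentially the same as the paper's and is correct in outline, but there is one factual slip: for a GM fourfold the sublattice $\langle \lambda_1,\lambda_2\rangle_{\Z}\subset \mathrm{K}_{\mathrm{num}}(\Ku(X))$ is \emph{not} of type $A_2$ but rather $A_1^{\oplus 2}$, with Gram matrix $\begin{pmatrix}-2&0\\0&-2\end{pmatrix}$ with respect to $\chi$ (see \cite[Lemma 2.27]{KP} and \cite{Pert}), and the image of $Z$ is spanned by $Z(\lambda_1),Z(\lambda_2)$ by \cite[Section~4]{PPZ}. This does not affect your argument, since all you need for (a0) and (b1) is that $Z$ has rank-$2$ image generated by $Z(\lambda_1),Z(\lambda_2)$ and that $\chi$ is symmetric and negative definite on $\langle\lambda_1,\lambda_2\rangle_{\Z}$; both hold for $A_1^{\oplus 2}$ just as for $A_2$. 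With that correction (and citing \cite[Theorem~1.5]{PPZ} for (b2)), your proof matches the paper's.
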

\begin{proof}
 The numerical Grothendieck group $\mathrm{K}_{\mathrm{num}}(\Ku(X))$ contains two classes $\lambda_1$ and $\lambda_2$ spanning an $A_1^{\oplus 2}$-lattice 
$$
\begin{pmatrix}
-2 & 0 \\
0 & -2
\end{pmatrix}
$$
with respect to the Euler pairing $\chi$ by \cite[Lemma 2.27]{KP} and \cite{Pert}.  The image of the central charge is spanned by the image of $\lambda_1$ and $\lambda_2$, see \cite[Section 4]{PPZ}. As the Serre duality on $\Ku(X)$ is $[2]$, the Euler pairing on $\mathrm{K}_{\mathrm{num}}(\Ku(X))$ is symmetric. Hence Assumption (a0) and (b1) hold.

Assumption (b2) follows by \cite[Theorem 1.5]{PPZ}.

Finally, Assumption (c) follows from Serre duality as that in Lemma \ref{lemma_Assforcubic}.
\end{proof}

\begin{lem} \label{lemma_AssGM3}
Let $X$ be a GM threefold. Then the stability conditions $\sigma$ on $\Ku(X)$ constructed in \cite{BLMS} satisfy Assumption \ref{star2}.
\end{lem}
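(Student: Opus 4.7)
The plan is to check the four items (a0), (b1), (b2), (c) of Assumption \ref{star2} for the stability condition $\sigma=(\AA, Z)$ on $\Ku(X)$ constructed in \cite[Section 6]{BLMS}, following the same template as Lemma \ref{lemma_Assforcubic} and the GM fourfold case, but accounting for the fact that $\Ku(X)$ is now an Enriques category whose Serre functor is $\tau\circ[2]$ with $\tau$ an involutive autoequivalence, rather than $[2]$.

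For (a0), I would recall from \cite{KP} (see also \cite{Pert}) that $\mathrm{K}_{\mathrm{num}}(\Ku(X))$ is a rank-$2$ lattice, generated by two explicit classes $\lambda_1,\lambda_2$, and that by construction in \cite{BLMS} the central charge $Z$ factors through the projection onto $\Z\lambda_1\oplus \Z\lambda_2$, so its image is spanned by $Z(\lambda_1)$ and $Z(\lambda_2)$. For (b1), although Serre duality only gives $\chi(E,F)=\chi(F,\tau E)$, the key input is that on the numerical level $\tau$ acts as the identity on $\mathrm{K}_{\mathrm{num}}(\Ku(X))$ (as computed in \cite{KP}), so $\chi$ becomes symmetric; negative definiteness on $\mathrm{span}_{\Z}\{\lambda_1,\lambda_2\}$ is then read off directly from the Gram matrix of the pair $\lambda_1,\lambda_2$ recorded in \cite{KP,Pert}.

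For (c), I would apply Serre duality in the Enriques category: for $\sigma$-stable $E,F\in\AA$,
\[
\Hom_{\KK}(E,F[m])\cong \Hom_{\KK}(F,\tau E[2-m])^{\ast}.
\]
Since $\tau$ preserves both the heart $\AA$ and the slope $\mu$ (because it acts trivially on $\mathrm{K}_{\mathrm{num}}$, hence on $Z$), the object $\tau E$ is $\sigma$-stable of the same slope as $E$; thus the right-hand side vanishes for $m\geq 3$ because $\AA$ is the heart of a t-structure, and for $m=2$ under $\mu(E)<\mu(F)$ by the usual vanishing between semistable objects of strictly decreasing slopes.

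The main obstacle is (b2), namely producing $\sigma$-stable objects of every primitive class $v$ with $\chi(v,v)$ sufficiently negative. For cubic fourfolds this was \cite[Theorem 1.6]{BLMNPS} and for GM fourfolds \cite[Theorem 1.5]{PPZ}, but in the odd-dimensional GM situation no such general result is yet in the literature. The plan is to invoke the paper in preparation \cite{PPZ_enriques} announced in the Strategy, whose role is precisely to establish the required density of $\sigma$-stable objects for Enriques Kuznetsov components, together with the homological control coming from \cite{PR, PY}; granting (b2) from that source, Assumption \ref{star2} holds for $\sigma$, and Proposition \ref{lem:assumptioneqvui} then upgrades this to Assumption \ref{star} for every small $\widetilde{\mathrm{GL}}_2^+(\R)$-translate.
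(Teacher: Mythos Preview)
Your template matches the paper's, and (a0), (b1), (b2) are handled essentially the same way (the paper cites \cite[Proposition 3.9]{Kuz_Fano3} rather than \cite{KP,Pert} for the rank-$2$ lattice and records the Gram matrix $\mathrm{diag}(-1,-1)$, but the content is the same, and (b2) is indeed supplied by \cite{PPZ_enriques}).

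The gap is in your argument for (c). You assert that $\tau$ preserves the heart $\AA$ ``because it acts trivially on $\mathrm{K}_{\mathrm{num}}$, hence on $Z$''. This inference is not valid: an autoequivalence that acts trivially on $\mathrm{K}_{\mathrm{num}}$ certainly preserves $Z$, but it need not preserve the slicing or the heart (the even shift $[2]$ already gives a counterexample). So from your hypotheses alone you cannot conclude that $\tau E$ lies in $\AA$, and hence you cannot run the vanishing $\Hom(F,\tau E[2-m])=0$ for $m\geq 3$ nor the slope comparison at $m=2$.

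The paper closes this gap differently: it invokes \cite[Theorem 1.1]{PR} to get that $\sigma$ is Serre-invariant, so $S_{\Ku(X)}(E)$ is $\sigma$-stable whenever $E$ is, and then uses $S_{\Ku(X)}^2\cong[4]$ together with \cite[Lemma 5.9]{PY} to obtain the phase bound $\phi(E)<\phi(S_{\Ku(X)}(E))\leq \phi(E)+2$. Assumption (c) then follows from Serre duality $\Hom(E,F[m])\cong\Hom(F[m],S_{\Ku(X)}(E))$. If you want to keep your $\tau$-formulation, you can salvage it, but only after importing Serre-invariance from \cite{PR}: that forces $\tau\cdot\sigma=\sigma\cdot\tilde g$ with $\tilde g$ having trivial $\mathrm{GL}_2^+$-part (since $\tau$ fixes $Z$), hence $\tau\cdot\sigma=\sigma[n]$ for some $n\in\Z$, and then $\tau^2=\mathrm{id}$ gives $n=0$, so $\tau$ does preserve $\AA$. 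The Serre-invariance input is the missing step.
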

\begin{proof}
Recall that $\mathrm{K}_{\mathrm{num}}(\Ku(X))$ has rank $2$ by \cite[Proposition 3.9]{Kuz_Fano3} and a basis is given by
$$\lambda_1:=1 - \frac{1}{5}H^2, \quad \lambda_2:=2-H +\frac{5}{6}P,$$
where $H$ is the class of a hyperplane in $X$. The intersection matrix with respect to $\chi$ is 
$$\langle \lambda_1,\lambda_2 \rangle= 
\begin{pmatrix}
-1 & 0 \\ 
0 & -1
\end{pmatrix}.$$
 Assumption (a0) and (b1) hold as the central charge is not degenerate.

Assumption (b2) follows by \cite{PPZ_enriques}. More precisely, if $X$ is a GM threefold, then for every pair of coprime integers $c$, $d$ and $v=c \lambda_1 +d \lambda_2$, the moduli space $M_\sigma(\Ku(X),v)$ is non empty and projective of dimension $v^2+1$.

Assumption (c) follows from Serre-duality and the fact that the property of being stable with respect to $\sigma$ is preserved by the Serre functor. Indeed, if $E$ is $\sigma$-stable, then its essential image via the Serre functor $S_{\Ku(X)}(E)$ is $\sigma$-stable by \cite[Theorem 1.1]{PR}. Since $S_{\Ku(X)}^2=[4]$, it is easy to check (see for instance \cite[Lemma 5.9]{PY}) that $\phi(E) < \phi(S_{\Ku(X)}(E)) \leq \phi(E)+2$. Then 
$$\Hom_{\KK}(E,F[m])\cong \Hom_{\KK}(F[m],S_{\Ku(X)}(E))=0$$ for $m\geq 3$, and if $\mu(E)<\mu(F)$, for $m \geq 2$.
\end{proof}

For the quartic double solid case, we do not know whether Assumption \ref{star2} (b2) holds in this case. But we can prove Assumption \ref{star} (b) directly.

\begin{lem}
Let $X$ be a quartic double solid. Let $\sigma$ be the stability condition on $\Ku(X)$ constructed in \cite{BLMS}. Then  for every $\tilde g \in\widetilde{\mathrm{GL}}^+(2,\R)$, the stability condition $\sigma\cdot \tilde g$  satisfies the Assumption $(\star)$.
\end{lem}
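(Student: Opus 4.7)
The plan is to verify Assumption $(\star)$ directly for every translate $\sigma\cdot\tilde g$, bypassing Assumption $(\star')$ since the moduli-space non-emptiness $(\star')$(b2) is not available for the quartic double solid. First I would fix the numerical data: by \cite{Kuz_Fano3}, $\mathrm{K}_{\mathrm{num}}(\Ku(X))$ is a rank-$2$ lattice with an explicit basis $\lambda_1,\lambda_2$, and the central charge of $\sigma$ from \cite{BLMS} factors through it, so its image is a finitely generated subgroup of $\C$ and remains discrete under the $\widetilde{\mathrm{GL}}^+(2,\R)$-action, giving Assumption $(\star)$(a). For Assumption $(\star)$(c), I would use that $S_{\Ku(X)}\cong\tau[2]$ for an involutive autoequivalence $\tau$, together with preservation of $\sigma$-stability under $S_{\Ku(X)}$ (the quartic-double-solid analogue of \cite[Theorem 1.1]{PR}) and a phase bound $\phi(E)<\phi(S_{\Ku(X)}(E))\leq\phi(E)+2$ in the style of \cite[Lemma 5.9]{PY}. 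Serre duality then delivers the vanishing of $\Hom_\KK(E,F[m])$ for $m\geq 3$, and for $m=2$ under $\mu(E)<\mu(F)$, exactly as in Lemma \ref{lemma_AssGM3}; these phase-based conditions are $\widetilde{\mathrm{GL}}^+(2,\R)$-equivariant and so carry over to every $\sigma\cdot\tilde g$.

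The heart of the argument is Assumption $(\star)$(b). I plan to run the inductive scheme of Lemma \ref{lem:sufficientstable} and Proposition \ref{lem:assumptioneqvui}, but with the existence input $(\star')$(b2) replaced by the density of $\sigma$-stable objects in $\AA$ proved in \cite{PPZ_enriques}. Given nonzero $E\in\AA$ and $s_0\in\R$, I would first produce a $\sigma$-stable $F_0$ with $\mu(F_0)$ arbitrarily negative and $\chi(F_0,E)\neq 0$: density supplies a $\sigma$-stable object with a chosen numerical class $v$ in a cofinal subset, and a case split according to whether the $\chi(\lambda_i,v(E))$ vanish, combined with the non-degeneracy of $\chi$, lets us arrange $\chi(v,v(E))\neq 0$ simultaneously with the slope bound. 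If $\chi(F_0,E)>0$, then (c) forces $\Hom_\KK(F_0,E)\neq 0$ and we are done; if $\chi(F_0,E)<0$, Serre duality and (c) produce a nonzero morphism in $\Hom_\KK(E[-1],F_0)$, and the cone construction from \ref{lem:sufficientstable}, together with induction on the order $\prec_{s_0}$ from the proof of \ref{lem:assumptioneqvui}, yields the desired $F$.

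The main obstacle is precisely this weakening of the input: density supplies stable objects only on a dense set of numerical classes, in contrast to (b2), which supplies one for every primitive class of bounded Euler square. We must therefore engineer a numerical class inside the rank-$2$ lattice that simultaneously lies in the dense locus, has slope below $s_0$, and pairs non-trivially with $v(E)$; arranging this triple constraint requires a more delicate choice than in the GM threefold case and uses that the relevant lattice is rank $2$. The boundary case $\delta_0(\sigma\cdot\tilde g)=0$ is handled, as in Lemma \ref{lem:sufficientstable}, by first passing to a nearby translate with $\delta_0\neq 0$, running the above argument there, and then transporting the resulting stable object back via the $\widetilde{\mathrm{GL}}^+(2,\R)$-action.
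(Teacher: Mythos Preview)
Your treatment of parts (a) and (c) is correct and matches the paper (for the quartic double solid the Serre-invariance input is \cite[Proposition~5.7]{PY} rather than \cite{PR}, but the argument is as you describe).

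The gap is in part (b). The input from \cite{PPZ_enriques} is more specific than a vague ``density'': it gives $M_{\sigma\cdot\tilde g}(\Ku(X),2c\lambda_1+d\lambda_2)\neq\emptyset$ for every pair of coprime integers $c,d$, so the $\lambda_1$-coefficient of any guaranteed stable class is \emph{even}. This matters because the cone/induction mechanism of Proposition~\ref{lem:assumptioneqvui} needs, in the $\delta_0>0$ case, a stable $F_0$ with $\mathrm{rk}(F_0)<\tfrac32\delta_0$: only then does one get $\mathrm{rk}(G^+)\le\mathrm{rk}(G)-\delta_0=\mathrm{rk}(E)$ for the maximal-slope Jordan--H\"older factor $G^+$ of the cone $G$, which is what drives the induction. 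When $Z_{\sigma\cdot\tilde g}(\lambda_2)\in\R$ (after normalisation $Z(a\lambda_1+b\lambda_2)=-b+ai$, so $\delta_0=1$), every stable object supplied by \cite{PPZ_enriques} has rank $2c\ge2>\tfrac32\delta_0$. Your ``triple constraint'' (dense locus, slope below $s_0$, nonzero $\chi$) omits precisely this rank bound, and the induction then stalls at the possibility $\mathrm{rk}(G^+)=\mathrm{rk}(E)+1$.

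The paper closes this with a dichotomy absent from your plan. One first asks whether \emph{some} $\sigma$-stable object of rank $1$ (character $\lambda_1+s\lambda_2$) with $s$ below the required threshold and $\chi(\cdot,E)\neq0$ exists. If yes, use it as $F_0$ and proceed as usual. If no, take instead a rank-$2$ stable $F_0$ with character $2\lambda_1+k\lambda_2$ for suitably negative odd $k$; in the cone step, the bad case $\mathrm{rk}(G^+)=\mathrm{rk}(E)+1$ would force $G/G^+$ to be a $\sigma$-stable object of rank $1$ with slope below the threshold, contradicting the hypothesis of this branch. Hence $\mathrm{rk}(G^+)\le\mathrm{rk}(E)$ after all and the induction proceeds. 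When $Z_{\sigma\cdot\tilde g}(\lambda_2)\notin\R$ the obstruction disappears: the paper observes that one may already choose the characters $nv+w$ (resp.\ $w_n$ in the $\delta_0=0$ subcase) of the form $2c\lambda_1+d\lambda_2$ with $\gcd(c,d)=1$, so Lemma~\ref{lem:sufficientstable} and Proposition~\ref{lem:assumptioneqvui} run unchanged. Your proposal should incorporate this case split on $Z(\lambda_2)$ and the rank-$1$ dichotomy.
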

\begin{proof}
By \cite[Proposition 3.9]{Kuz_Fano3}, we have that
$$\lambda_1:= 1- \frac{1}{2}H^2, \quad \lambda_2:=H - \frac{1}{2}H^2-\frac{2}{3}P$$ 
is a basis for $\mathrm{K}_{\mathrm{num}}(\Ku(X))$ with intersection form 
$$\langle \lambda_1,\lambda_2 \rangle= 
\begin{pmatrix}
-1 & -1 \\ 
-1 & -2
\end{pmatrix}$$
with respect to $\chi$. Assumption \ref{star2} (a0) and (b1) hold.

Assumption (c) can be proved as in Lemma \ref{lemma_AssGM3} using that $\sigma$ is Serre invariant by \cite[Proposition 5.7]{PY}.

By \cite{PPZ_enriques}, for every pair of coprime integers $c$ and $d$, the moduli space $M_{\sigma\cdot \tilde g}(\Ku(X), 2c \lambda_1+d\lambda_2)$ is non-empty. When $Z_{\sigma\cdot\tilde g}(\lambda_2)\not\in \R$, the argument in Lemma \ref{lem:sufficientstable} still holds. The only difference is on the choice of characters $nv+w$ and $w_n$. In the case of $\delta_0>0$, one may choose $n$ also satisfying $nv+w=2c\lambda_1+d\lambda_2$ for some coprime $c$ and $d$. In the case of $\delta_0=0$, one may choose $w_n=2a_n\lambda_1+b_n\lambda_2$ for coprime $a_n$ and $b_n$. Note that the proof in Proposition \ref{lem:assumptioneqvui} only relies on Assumption \ref{star2} (a0) (b1) (c) and the property in Lemma \ref{lem:sufficientstable}, the stability condition $\sigma\cdot\tilde g$ satisfies Assumption \ref{star} when $Z_{\sigma\cdot\tilde g}(\lambda_2)\not\in \R$.

For the remaining case when $Z_{\sigma\cdot\tilde g}(\lambda_2)\in \R$, by taking a $\widetilde{\mathrm{GL}}^+(2,\R)$-action without disturbing the heart, we may assume that the central charge $Z_{\sigma\cdot\tilde g'}$ is of the form $Z_{\sigma\cdot\tilde g'}(a\lambda_1+b\lambda_2)=-b+ai$.  To simplify notations, we will denote the stability condition by $\sigma$ and the central charge by $Z$.

\begin{lem}\label{lem:doublequarticsolidassump}
Adopt notations as above, then  for every $\sigma$-stable object $E$ in $\AA$ and every real number $s_0$, there exists a $\sigma$-stable object $F$ in $\AA$ satisfying $\mu(F)<s_0$ and $\Hom_{\KK}(F,E)\neq 0$.
\end{lem}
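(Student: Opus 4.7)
The plan is to mimic the inductive argument of Proposition \ref{lem:assumptioneqvui} in the $\delta_0\neq 0$ regime (here $\delta_0=1$ since $\Im Z$ takes values in $\Z$), with the extra complication that \cite{PPZ_enriques} only provides stable objects of numerical class $2c\lambda_1+d\lambda_2$ with $\gcd(c,d)=1$, so in particular with even rank. First I would induct on $Z(E)$ with respect to the order $\prec_{s_0}$ defined in the proof of Proposition \ref{lem:assumptioneqvui}. The base case is $\mu(E)<s_0$, in which $F=E$ works trivially.

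For the inductive step, write $v(E)=a\lambda_1+b\lambda_2$ and, using the intersection matrix, compute
\begin{equation*}
\chi(2c\lambda_1+d\lambda_2,\,E)=-2c(a+b)-d(a+2b).
\end{equation*}
Since $E\neq 0$, the pair $(a+b,\,a+2b)$ is nonzero. For any sufficiently large positive $c$, one can choose $d$ coprime to $c$ with $d/(2c)<s$ for a fixed $s<s_0$ having enough slack, and such that $\chi(2c\lambda_1+d\lambda_2,E)\neq 0$; the latter condition excludes at most one value of $d$ per $c$. By \cite{PPZ_enriques} this yields a $\sigma$-stable $F_0\in\AA$ of class $2c\lambda_1+d\lambda_2$ with $\mu(F_0)<s<s_0$.

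Next I would split on the sign of $\chi(F_0,E)$. If $\chi(F_0,E)>0$, Assumption $(\star')$(c) combined with $\mu(F_0)<\mu(E)$ makes $\Hom_\KK(F_0,E[m])$ vanish in all degrees $m\neq 0,1$, so $\Hom_\KK(F_0,E)\neq 0$ and we take $F=F_0$. If $\chi(F_0,E)<0$, symmetry of $\chi$ (Assumption $(\star')$(b1)) and Assumption $(\star')$(c) force $\Hom_\KK(E,F_0[1])\neq 0$; choose $e\colon E[-1]\to F_0$ nonzero and set $G:=\mathrm{Cone}(e)\in\AA$, fitting into a short exact sequence $0\to F_0\to G\to E\to 0$. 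If $G$ is $\sigma$-stable, arrange $c$ large so that $\mu(G)=(d+b)/(2c+a)<s_0$ and take $F=G$. Otherwise take a maximal-slope Jordan--H\"older subobject $G^+\hookrightarrow G$, project to $E$ to get a subobject $E_0\subset E$, and extract $E_0^+\hookrightarrow E_0$; since $\mathrm{rk}(G^+)\leq\mathrm{rk}(G)-\delta_0=\mathrm{rk}(E)$ one argues as in Proposition \ref{lem:assumptioneqvui} that $E_0^+\hookrightarrow E$ is a proper subobject, so either $\mu(E_0^+)<s_0$ and $F=E_0^+$ works, or $Z(E_0^+)\prec_{s_0}Z(E)$ and the induction hypothesis applies.

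The main obstacle is the parity restriction on the available stable classes, which prevents us from picking a stable object of rank exactly $\delta_0=1$ as in Lemma \ref{lem:sufficientstable}. My expectation is that this is harmless: by taking $c$ large, the extra rank $2c$ dominates $a$, and all slope and ranking comparisons needed for the cone construction still hold, so the inductive argument of Proposition \ref{lem:assumptioneqvui} goes through essentially unchanged in this special normalized setting.
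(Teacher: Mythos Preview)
There is a genuine gap. The equality $\mathrm{rk}(G)-\delta_0=\mathrm{rk}(E)$ is false: you chose $F_0$ of class $2c\lambda_1+d\lambda_2$, so $\mathrm{rk}(F_0)=2c$ and $\mathrm{rk}(G)=\mathrm{rk}(E)+2c$, giving $\mathrm{rk}(G)-\delta_0=\mathrm{rk}(E)+2c-1$. The argument of Proposition~\ref{lem:assumptioneqvui} in the $\delta_0>0$ case works precisely because there one takes $\mathrm{rk}(F_0)=\delta_0$, which forces $\mathrm{rk}(G^+)\le\mathrm{rk}(E)$. With $\mathrm{rk}(F_0)=2c$ large, nothing prevents $\mathrm{rk}(G^+)$ from exceeding $\mathrm{rk}(E)$; in that situation the nonzero map $G^+\to E$ can be surjective (its kernel would be a proper subobject of the stable object $G^+$ of positive rank, which is perfectly consistent with $\mu(G^+)<\mu(E)$), so $E_0=E$, $E_0^+=E$, and the induction does not move. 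Taking $c$ large makes this worse, not better, since it enlarges $\mathrm{rk}(G)$.

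The paper goes in the opposite direction and takes the \emph{smallest} rank available from \cite{PPZ_enriques}, namely $\mathrm{rk}(F_0)=2$ with class $2\lambda_1+k\lambda_2$, $k$ odd. Then $\mathrm{rk}(G^+)\le\mathrm{rk}(E)+1$, and only the single boundary case $\mathrm{rk}(G^+)=\mathrm{rk}(E)+1$ is problematic. That case is eliminated by a preliminary reduction: one first checks whether some rank-$1$ $\sigma$-stable object of class $\lambda_1+s\lambda_2$ with $s$ small and $\chi(\lambda_1+s\lambda_2,E)\ne 0$ exists; if so, the argument of Proposition~\ref{lem:assumptioneqvui} applies verbatim with $\mathrm{rk}(F_0)=\delta_0=1$. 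If no such object exists, one proceeds with the rank-$2$ object, and in the bad case $G/G^+$ turns out to be a rank-$1$ $\sigma$-stable object of slope $\le\mu(G)$, which by the choice of constants lies below the threshold and satisfies $\chi(G/G^+,E)\ne 0$, contradicting the reduction hypothesis. This dichotomy---try rank $1$ first, and if that fails use rank $2$ while exploiting the failure---is the idea missing from your outline.
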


\begin{proof}[Proof of Lemma \ref{lem:doublequarticsolidassump}:]

We will make induction with respect to the partial order $\prec_{s_0}$ as that defined in the proof of Proposition \ref{lem:assumptioneqvui}. 

Assume that for every $\sigma$-stable object $E'\in\mathcal A$ satisfying $Z(E')\prec_{s_0} Z(E)$, the statement holds. 

If there exists a $\sigma$-stable object $F_0$ with character $[F_0]=\lambda_1+s\lambda_2$ satisfying $\chi(F_0,E)\neq 0$, $s<\min\{s_0, (\mathrm{rk}(E)+1)s_0-\mathrm{deg}(E)\}$. 
If $\chi(F_0,E)>0$, then by Assumption (c), $\Hom_\KK(F_0,E)\neq 0$ and the statement holds. If  $\chi(E,F_0)=\chi(F_0,E)<0$, then by Assumption (c),  $\Hom_\KK(E,F_0[1])\neq 0$.  One can show the existence of $F$ satisfying $\mu(F)<s_0$ and $\Hom_{\KK}(F,E)\neq 0$ by the same argument as that in Proposition \ref{lem:assumptioneqvui}.

We may now assume that there is no $\sigma$-stable object $F_0$ with character $[F_0]=\lambda_1+s\lambda_2$ satisfying $\chi(F_0,E)\neq 0$ and $s<\min\{s_0, (\mathrm{rk}(E)+1)s_0-\mathrm{deg}(E)\}$. As $\mathrm{K}_{\mathrm{num}}(\Ku(X))$ is spanned by $\lambda_1$ and $\lambda_2$ and $\chi$ is non-degenerate on $\mathrm{K}_{\mathrm{num}}(\Ku(X))$, there exists $t\in\R$ such that $\chi(E,\lambda_1+t'\lambda_2)\neq 0$ for every $t'\leq t$. Let $s'_0:=\min\{s_0,t,(\mathrm{rk}(E)+1)s_0-\mathrm{deg}(E)\}$. Let $k$ be an odd integer such that $k<\min\{2s'_0,(\mathrm{rk}(E)+2)s'_0-\mathrm{deg}(E)\}$.

By \cite{PPZ_enriques}, there exists a $\sigma$-stable object $F_0$ in $\AA$ with character $[F_0]=2\lambda_1+k\lambda_2$. 

If $\chi(F_0,E)>0$, then by Assumption (c), $\Hom_\KK(F_0,E)\neq 0$ and the statement holds.

Otherwise, since $k<2s'_0\leq 2t$, $\chi(F_0,E)\neq 0$. We must have $\chi(E,F_0)=\chi(F_0,E)<0$. By Assumption (c),  $\Hom_\KK(E,F_0[1])\neq 0$.  Let $e$ be a non-zero morphism in $\Hom_\KK(E[-1],F_0)$ and $G$ be Cone$(E[-1]\xrightarrow{e}F_0)$.  In particular, $G$ is in $\mathcal A$.

If $G$ is $\sigma$-stable, then we may let $F=G$. Indeed, we always have $\Hom_\KK(G,E)\neq 0$. Also, we have $\mu(G)=\frac{\deg(E)+\deg(F_0)}{\mathrm{rk}(E)+\mathrm{rk}(F_0)}=\frac{\deg(E)+k}{\mathrm{rk}(E)+2}<s'_0\leq s_0$. The statement holds.

If $G$ is not $\sigma$-stable, then we consider one of the Jordan--H\"older factors $G^+$ of $G$ with maximal slope that is also a subobject of $G$. Suppose $\mathrm{rk}(G^+)=\mathrm{rk}(E)+1$, then $G/G^+$ is with character $\mathrm{rk}(G/G^+)=1$. Therefore, $G/G^+$ is $\sigma$-stable with  slope less than or equal to $\mu(G)$. Let the character of $G/G^+$ be $\lambda_1+k'\lambda_2$, then $$k'=\mu(G/G^+)\leq \mu(G)=\frac{\deg(E)+k}{\mathrm{rk}(E)+2}<s'_0.$$
This leads to the contradiction since we have assumed that there is no such stable object.

Now we may assume $\mathrm{rk}(G^+)\leq \mathrm{rk}(E)$. Note that $0\neq \Hom_\KK(G^+,G)\hookrightarrow \Hom_\KK(G^+,E)$. We may let $f$ be a non-zero morphism in $\Hom_\KK(G^+,E)$. The image of $f$ is a non-zero subobject $E_0$ of $E$ in $\mathcal A$. Since $e\neq 0$, we have  $\mu(G^+)<\mu(E)$. Since $\mathrm{rk}(G^+)\leq \mathrm{rk}(E)$, $E_0$ is a proper subobject of $E$ in $\AA$.

Let $E^+_0$ be one of the Jordan--H\"older factors of $E_0$ with maximal slope that is a subobject of $E_0$. Then $E^+_0$ is also a subobject of $E$ in $\mathcal A$ as well. It is clear that $Z(E^+_0)\prec_{s_0} Z(E)$. By induction, there exists a $\sigma$-stable object $F$ in $\AA$ satisfying $\mu(F)<s_0$ and $\Hom_{\mathcal K}(F,E^+_0)\neq 0$. Therefore, $\Hom_{\mathcal K}(F,E)\neq 0$ and the statement holds by induction.
\end{proof}
We have proved that $\sigma\cdot\tilde g$ satisfies Assumption \ref{star} (b) for every $\tilde g \in\widetilde{\mathrm{GL}}^+(2,\R)$. The statement holds.
\end{proof}

We are now ready to prove Theorem \ref{thm_application_intro} and Theorem \ref{thm_FMtype}.

\begin{thm} \label{thm_application}
Let $\Ku(X)$ be the Kuznetsov component of a cubic fourfold or of a Gushel--Mukai variety or of a quartic double solid. Then there is an equivalence $F \colon \Ku(X) \cong \Db(\AA)$, where $\AA$ is the heart of a stability condition on $\Ku(X)$ and $F$ is defined in Lemma \ref{lemma_defF}. Moreover, we have that $\Ku(X)$ has a strongly unique enhancement.
\end{thm}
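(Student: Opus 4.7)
The plan is to assemble the theorem by combining the general statements from Theorem \ref{thm_derivedcategoryofheart} and Theorem \ref{thm_stroglyunique} with the case-by-case verifications of Assumption \ref{star2} (respectively Assumption \ref{star}) that precede the statement. In each geometric situation the ambient category is $\TT = \Db(X) = \Db(\mathrm{Coh}(X))$, so the hypothesis that $\TT$ is the derived category of an abelian category is automatic. Thus the only things to exhibit are a stability condition $\sigma = (\AA, Z)$ on $\Ku(X)$ with heart $\AA$ induced from a heart on $\Db(X)$ and satisfying Assumption \ref{star}.

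First I would collect, for each of the four cases, a stability condition of the required form. For cubic fourfolds this is the stability condition from \cite{BLMS}; for Gushel--Mukai varieties of even dimension, that of \cite{PPZ}; and for Gushel--Mukai varieties of odd dimension and quartic double solids, the stability conditions from \cite[Section 6]{BLMS}. In every case the heart $\AA$ is built by intersecting $\Ku(X)$ with a double tilt of $\mathrm{Coh}(X)$, so by construction it is the restriction of a bounded t-structure on $\Db(X)$ and therefore is induced from a heart on $\TT$ as required by Theorem \ref{thm_derivedcategoryofheart}.

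Next I would invoke the preceding four lemmas to verify the remaining hypothesis. Lemma \ref{lemma_Assforcubic}, the GM fourfold lemma, and Lemma \ref{lemma_AssGM3} show Assumption \ref{star2} for cubic fourfolds, GM varieties of even dimension, and GM varieties of odd dimension, respectively; Proposition \ref{lem:assumptioneqvui} then converts this into Assumption \ref{star} (for $\sigma \cdot \tilde g$ with $\tilde g$ in a neighbourhood of the identity when $\delta_0 = 0$). For quartic double solids, the quartic double solid lemma shows Assumption \ref{star} directly for every $\sigma \cdot \tilde g$. With Assumption \ref{star} in hand, Theorem \ref{thm_derivedcategoryofheart} produces the equivalence $F\colon \Db(\AA) \xrightarrow{\sim} \Ku(X)$ of Lemma \ref{lemma_defF}, and Theorem \ref{thm_stroglyunique} upgrades this to strong uniqueness of the enhancement.

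Since all the substantive verifications have already been carried out in the preceding lemmas, the proof of the theorem itself is essentially assembly. The only point I would flag as requiring care is to confirm, in each geometric situation, that the particular heart used to state the stability condition really is obtained by intersecting with $\Ku(X)$ a bounded heart on $\Db(X)$; for the cubic fourfold and GM cases this is transparent from \cite{BLMS, PPZ}, while for the quartic double solid one uses the same inducing mechanism from \cite[Section 6]{BLMS} together with \cite{PY}. Once this is noted, the conclusion follows in one line in each of the four cases.
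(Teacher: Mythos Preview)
Your assembly is essentially the same as the paper's, but there is one genuine gap: the preceding lemmas only treat Gushel--Mukai \emph{fourfolds} and Gushel--Mukai \emph{threefolds}, not GM varieties of arbitrary even or odd dimension. When you write that ``the GM fourfold lemma'' verifies Assumption \ref{star2} ``for GM varieties of even dimension'' and that Lemma \ref{lemma_AssGM3} handles ``GM varieties of odd dimension'', you are overclaiming: neither lemma says anything about GM fivefolds or sixfolds, and no stability conditions on $\Ku(X)$ in those dimensions have been analyzed in the paper.

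The paper closes this gap by a reduction step you omit: if $X$ is a GM fivefold (resp.\ sixfold), then by the duality conjecture proved in \cite[Theorem 1.6]{KP_cones} (see \cite[Proof of Theorem 4.18]{PPZ}) there is an exact equivalence $\Ku(X) \simeq \Ku(X')$ with $X'$ a GM threefold (resp.\ fourfold). Since both conclusions of the theorem---the equivalence $\Ku(X) \cong \Db(\AA)$ and the strong uniqueness of the enhancement---are intrinsic to the triangulated category, they transfer along this equivalence. With this reduction in place, your argument (invoking the four lemmas, Proposition \ref{lem:assumptioneqvui}, and then Theorems \ref{thm_derivedcategoryofheart} and \ref{thm_stroglyunique}) is exactly the paper's proof.
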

\begin{proof}
Note that if $X$ is a GM fivefold (resp.\ sixfold), then its Kuznetsov component $\Ku(X)$ is equivalent to that of a GM threefold (resp.\ fourfold). This is a consequence of the duality conjecture proved in \cite[Theorem 1.6]{KP_cones}, as explained in \cite[Proof of Theorem 4.18]{PPZ}. Thus we reduce to prove the statement in this case. Then it is a consequence of the above lemmas, Theorem \ref{thm_derivedcategoryofheart} and Theorem \ref{thm_stroglyunique}.
\end{proof}

\begin{proof}[Proof of Theorem \ref{thm_FMtype}]
Assume that $X_1$, $X_2$ are cubic fourfolds or GM varieties of even dimension. Let $F \colon \Ku(X_1) \to \Ku(X_2)$ be a fully faithful exact functor. Then $F$ is an equivalence. Indeed, note that the $0$-th Hochschild cohomology of $\Ku(X_i)$ is $\text{HH}^0(\Ku(X_i))=\C$, thus $\Ku(X_i)$ is connected. Since $\Ku(X_i)$ is Calabi--Yau, by \cite[Proposition 5.1]{Kuz_Calabi} it follows that $F$ is an equivalence. The results then follows from Corollary \ref{cor_FMequivalence}.
\end{proof}

\begin{rmk} \label{rmk_cubic3}
Note that Theorem \ref{thm_derivedcategoryofheart} could potentially be applied to the Kuznetsov component of a cubic threefold. The main missing ingredient is the non-emptyness of moduli spaces of stable objects for the constructed stability conditions. This is part of the work in progress \cite{FLZ}.
\end{rmk}



\bibliographystyle{alpha}
\bibliography{references}

C.L.: Mathematics Institute (WMI), University of Warwick, Coventry, CV4 7AL, United Kingdom.\\
\indent E-mail address: \texttt{C.Li.25@warwick.ac.uk}\\
\indent URL: \texttt{https://sites.google.com/site/chunyili0401/} \\

L.P.: Dipartimento di Matematica ``F.\ Enriques'', Universit\`a degli Studi di Milano, Via Cesare Saldini 50, 20133 Milano, Italy. \\
\indent E-mail address: \texttt{laura.pertusi@unimi.it}\\
\indent URL: \texttt{http://www.mat.unimi.it/users/pertusi} \\

X.Z.: Department of Mathematics, University of California, Santa Barbara, 552 University Rd, Santa Barbara, CA 93117. \\
\indent E-mail address: \texttt{xlzhao@ucsb.edu}\\
\indent URL: \texttt{https://sites.google.com/site/xiaoleizhaoswebsite/} 

\end{document}